\documentclass[a4paper,11pt]{amsart}
\usepackage[left= 3.4cm,right = 3.4cm, bottom=4.5cm, top=3.5cm]{geometry}
\title{Auxiliary Hessian Equations}

\author{Johannes G.\ Scheffler}

\usepackage[utf8]{inputenc}
\usepackage[ngerman, english]{babel}
\usepackage[autostyle=true,german=quotes]{csquotes}
\usepackage[T1]{fontenc}
\usepackage{graphicx, subfig}
\graphicspath{{img/}}
\usepackage{lmodern}
\usepackage{color}
\usepackage[backend=biber,maxbibnames=99,style=alphabetic]{biblatex}  
\usepackage{newunicodechar}
\newunicodechar{∞}{$\infty$}
\usepackage{hyperref,mathtools}

\usepackage{verbatim} 

\usepackage{amsfonts}
\usepackage{amsmath}
\usepackage{amsthm} 

\usepackage{amssymb}
\usepackage[new]{old-arrows}
\usepackage{faktor}
\usepackage{tikz-cd} 

\usepackage{mathrsfs}

\usepackage{relsize}

\usepackage{ dsfont }

\newcommand{\RR}{\mathbb{R}}
\newcommand{\CC}{\mathbb{C}}

\newcommand{\dbar}{\overline{\partial}}

\newcommand{\Cont}{\mathscr{C}}

\newcommand{\ii}{\mathrm{i}}

\usepackage{enumitem}

\usepackage{aligned-overset}

\begin{document}

\theoremstyle{definition}
\newtheorem{Definition}{Definition}
\newtheorem{Beispiel}[Definition]{Example}
\newtheorem{Construction}[Definition]{Construction}
\newtheorem{Assumption}[Definition]{Assumption}

\theoremstyle{remark}
\newtheorem{Bemerkung}[Definition]{Remark}

\theoremstyle{plain}
\newtheorem{Satz}[Definition]{Satz}
\newtheorem{Folgerung}[Definition]{Corollary}
\newtheorem{Lemma}[Definition]{Lemma}
\newtheorem{Proposition}[Definition]{Proposition}
\newtheorem{Theorem}[Definition]{Theorem}
\newtheorem{Conjecture}[Definition]{Conjecture}
\newtheorem*{THeorem}{Theorem} 
\newtheorem*{SAtz}{Satz}

\maketitle



\pagenumbering{arabic} 


\enlargethispage{1\baselineskip}

\begin{abstract}
     We propose $k$-Hessian equations as auxiliary equations for degenerations of Kähler classes to nef classes of numerical dimension $k$. As proof-of-concept results, we provide a mean-value inequality and a priori estimates for Hessian equations.
\end{abstract}

\section{Key words}

\section{Introduction}
\subsection{Idea of the paper} Inspired by the recent gigantic progress obtained by the systematic use of auxiliary Monge-Amp\`ere equations (started by Guo-Phong-Tong in \cite{guo$L^infty$EstimatesComplex2021}; cf.\ the surveys \cite{guoAuxiliaryMongeAmpereEquations2022} and \cite{guoGeometricInvariantsMongeAmpere2025} and the references therein) we want to propose other non-linear partial differential equations suited as auxiliary equations, namely complex Hessian equations. Similar to the Monge-Amp\`ere equation, the Hessian equation -- in a local setting -- prescribes properties of the eigenvalues of the complex Hessian matrix of a function. Instead of the product of all the eigenvalues, however, we consider more generally elementary symmetric polynomials of degree $1< k\leq n$. There is also a well-developed existence theory of solutions to Hessian equations, established in the last two decades by S.-Y.\ Li, Z. B\l ocki, Z. Hou, A. Jbilou, Hou-Ma-Wu, Dinew-Ko\l odziej, and G. Székelyhidi (see \cite{liDIRICHLETPROBLEMSSYMMETRIC2004} \cite{blockiWeakSolutionsComplex2005} \cite{houComplexHessianEquation2009} \cite{jbilouEquationsHessiennesComplexes2009} \cite{houSecondOrderEstimate} \cite{dinewLiouvilleCalabiYauType2017} \cite{szekelyhidiFullyNonlinearElliptic2018}). An important advantage of Hessian equations is the dependence of certain cohomological constants only on $k$-th powers of a class, allowing a lower bound for a nef class being only of numerical dimension $k$ instead of big. This is closely tied to the larger space of admissible solutions -- so-called $k$-subharmonic functions -- having relaxed positivity requirements compared to psh-functions.

Naturally, one needs to pay a price in the form of weaker properties, for example no integral bound for the exponential of solutions (the $\alpha$-invariant estimate) is available but only $L^q$ bounds for certain exponents. Therefore, modifications to Guo-Phong-Tong's technique are required. The general structure of the argument stays very similar, though:
\begin{enumerate}
    \item Solve a Hessian equation with a quantity of interest, e.g., subsolutions for degenerating linear elliptic operators, on the RHS.
    \item Compare the solution $\psi$ of the auxiliary Hessian equation with the quantity of interest. For this, apply the maximum principle with a suitable linear elliptic operator $L$ and use the mixed Hessian inequality -- as a replacement for the AM-GM inequality -- to compare $L\psi$ and the RHS of the auxiliary equation.
    \item From the uniform $L^p$-bound of $k$-sh functions derive an integral bound for the RHS of the auxiliary equation, i.e., the quantity of interest.
\end{enumerate}

These uniform integral bounds can then be exploited, e.g., in a De Giorgi iteration scheme.

To illustrate this approach, first, we obtain \textit{a priori} estimates for the $k$-Hessian equation in a ray of the Kähler cone degenerating to a nef class of numerical dimension $k$. Secondly, we will prove a mean value inequality in the style of Lemma 2 of \cite{GPS22} for a suitably degenerating linear elliptic operator, namely the linearized Hessian operator.

\subsection{Setup and main results}
    Let $(X,\omega_X)$ be a $n$-dimensional compact Kähler manifold with reference Kähler metric $\omega_X$. Assume $\mathrm{Vol}(X, \omega_X)=1$. Let $\chi$ be a closed, smooth, real $(1,1)$-form such that its class $[\chi]\in H^{1,1}(X, \RR)$ is nef and has numerical dimension $1 < \nu([\chi]) = k < n$. Let us assume for simplicity that the form $\chi$ is in addition semi-positive. This can be avoided by modifying the arguments similarly to the work of Guo-Phong-Tong-Wang (cf.\ \cite{GPTW24}, also \cite{schefflerAuxiliaryMongeAmpereEquations2024}) using the $k$-sh envelope of $\chi$. Let $\omega_t \coloneqq \chi + t \omega_X$ for $t\in (0,1]$ and note that this is a Kähler form. 
    Let
     $V_{k,t} \coloneqq \int_X \omega_t^k \wedge \omega_X^{n-k}$ is the intersection number playing the role of a \textit{mixed volume}. Note that $V_{k,0} = \lim_{t\rightarrow 0} V_{k,t} >0$ by the assumption on the numerical dimension of $[\chi]$.

    \begin{Assumption}
        Let the numerical dimension of $[\chi]$ be $n-1$, i.e., $k=n-1$. 
    \end{Assumption}

    This assumption is necessary due to the range of exponents for which an uniform $L^p$-bound for $k$-sh functions is known (by Lemma 8 of \cite{guo$L^infty$EstimatesComplex2021} for $p\in ]0, \frac{n}{n-k}[$). Conjecturally, this range is $p\in ]0, \frac{nk}{n-k}[)$. For us, the key ingredient is an uniform $L^p$-bound for some $p > k$. So if the conjectural bound can be established, the following statements hold for all $1<k<n$. We will use in the proofs $k$ instead of $n-1$ to indicate how the numerical dimension affects the arguments. Also, up to Lemma \ref{estimate_modified_energy} this restriction on $k$ is not needed.

    \subsubsection{Uniform $L^\infty$-estimates for degenerating Hessian equations}
    We will consider the following family of Hessian equations, degenerating as $t\rightarrow 0$. Let $F_t$ be given smooth function, normalized so that $\int_X e^{F_t}\omega_X^n = \int_X \omega_t^k \wedge \omega_X^{n-t}$. 
    Similarly to the technique of auxiliary Monge-Ampère equations, an entropy bound is required. 

    \begin{Assumption}
        Let for some $2<q<\infty$ and all $t\in ]0,1]$ the $L^q$-norm of $e^{F_t}$ with respect to $\omega_X^n$ be bounded by some uniform constant $N>0$, i.e.,
        \begin{align}
            \int_X e^{qF_t} \omega_X < N.
        \end{align}
    \end{Assumption}

    For general $k$ we need -- assuming the conjectural range of uniform $L^p$ bound for $k$-sh functions -- this entropy condition for some $q > \max\{1,\frac{n-k-k^2}{k^2}\}$.

    \begin{Theorem} \label{Thm_HEq_Linftyestimate}
         Assume that for $t>0$ we have solutions $\varphi_t$ for the Hessian equations
         \begin{align} \label{HEq_Linftyestimates}
            (\omega_t + dd^c \varphi_t )^k \wedge \omega_X^{n-k} = e^{F_t} \omega_X^n, \quad \sup_X \varphi_t = 0.
         \end{align}
         Then there is a constant $C=C(X, \omega_X, \chi, N, q) >0$ independent of $t$ such that 
         \begin{align*}
             \sup_X |\varphi_t| \leq C.
         \end{align*}
    \end{Theorem}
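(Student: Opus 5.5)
We follow the Guo--Phong--Tong scheme, replacing the auxiliary Monge--Amp\`ere equation by an auxiliary $k$-Hessian equation with respect to $\omega_t$ and $\omega_X$. Fix $t$, write $\varphi=\varphi_t$, and for $s>0$ let $\Omega_s=\{\varphi<-s\}$. Let $\tau_j$ be smooth positive approximations of $x\mapsto x_+$. By the existence theory for complex Hessian equations we solve
\begin{align*}
(\omega_t+dd^c\psi_{j})^k\wedge\omega_X^{n-k}=\frac{\tau_j(-\varphi-s)}{A_{s,j}}\,e^{F_t}\,\omega_X^n,\qquad \sup_X\psi_j=0,
\end{align*}
where $A_{s,j}=V_{k,t}^{-1}\int_X\tau_j(-\varphi-s)e^{F_t}\omega_X^n$ is fixed by the cohomological normalization. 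The point is that only $V_{k,t}\geq c V_{k,0}>0$ enters, and this is where the numerical dimension $k$ is used in place of bigness.

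\textbf{Comparison step.} On $\Omega_s$ we consider $\Phi=-\varepsilon(-\psi_j+\Lambda)^{\frac{k}{k+1}}-(\varphi+s)$ with $\varepsilon=\bigl(\tfrac{k+1}{k}\bigr)^{\frac{k}{k+1}}A_{s,j}^{\frac{1}{k+1}}$ and $\Lambda$ small. We apply the maximum principle with the linearized operator $L=\sigma_k^{i\bar j}\partial_i\bar\partial_j$ of $\varphi$, meaning the coefficient form of $k(\omega_t+dd^c\varphi)^{k-1}\wedge\omega_X^{n-k}$. Gårding's mixed inequality
\begin{align*}
k\,\omega_{\psi}\wedge\omega_\varphi^{k-1}\wedge\omega_X^{n-k}\geq k\,(\omega_\psi^k\wedge\omega_X^{n-k})^{1/k}(\omega_\varphi^k\wedge\omega_X^{n-k})^{(k-1)/k}
\end{align*}
takes the place of AM--GM. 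It gives $L\Phi\ge0$ at an interior maximum, exactly as in GPT, and hence $\Phi\le 0$. This yields
\begin{align*}
(-\varphi-s)^{\frac{k+1}{k}}\le C\,A_{s}^{1/k}(-\psi+\Lambda)\quad\text{on }\Omega_s .
\end{align*}
The constant depends only on $k$ and the normalization.

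\textbf{Integral step.} Here the $\alpha$-invariant estimate is replaced by the uniform $L^p$ bound for $\omega_t$-$k$-sh functions, which holds with $p<\tfrac{n}{n-k}$ by Lemma 8 of \cite{guo$L^infty$EstimatesComplex2021}; for $k=n-1$ this means $p<n$. We need this uniformly in $t$, which is fine since $\omega_t\le\chi+\omega_X$ and sets of $k$-sh functions increase with the form. Let $q'$ be the conjugate exponent of $q$. Raising the comparison inequality to a power and integrating against $e^{F_t}\omega_X^n$, Hölder gives
\begin{align*}
\int_{\Omega_s}(-\varphi-s)^{\frac{(k+1)r}{k}}e^{F_t}\omega_X^n\le C\,A_s^{r/k}\,N^{1/q}\Bigl(\int_X(-\psi)^{rq'}\omega_X^n\Bigr)^{1/q'}\le C A_s^{r/k},
\end{align*}
provided $rq'<p$. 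Choosing $r$ close to $p/q'$, we need $\frac{(k+1)r}{k}>$ roughly $1$ while keeping the final exponent $\delta$ positive. The hypothesis $q>2$ with $k=n-1$ is exactly what makes this possible: $p$ approaches $n=k+1$, so $r$ can exceed $k$ once $q'<(k+1)/k$.

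\textbf{De Giorgi iteration.} Set $\phi(s)=\int_{\Omega_s}e^{F_t}\omega_X^n$. Using $A_s\le C\int_{\Omega_s}(-\varphi-s)e^{F_t}$ together with Hölder and the previous bound, we get for $r>k$
\begin{align*}
r'\,\phi(s+r')\le C\,\phi(s)^{1+\delta}.
\end{align*}
The standard De Giorgi lemma then gives $\phi(s)=0$ for $s\ge s_0+C$. The starting level $s_0$ comes from $\int_X(-\varphi)e^{F_t}\omega_X^n\le C$, which follows from Hölder and the $L^1$-bound of normalized $k$-sh functions. Since $e^{F_t}>0$, vanishing of $\phi(s)$ means $\varphi\ge -s_0-C$, which is the claim.

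\textbf{Main obstacle.} I expect the integral step, the exponent bookkeeping, to be the hardest part. Making $r>k$ compatible with $rq'<p<\frac{n}{n-k}$ is what forces $k=n-1$ and $q>2$, and I would first verify carefully that these constraints give $\delta>0$. A secondary difficulty is making the $L^p$ bound uniform in $t$ for the degenerating forms $\omega_t$.
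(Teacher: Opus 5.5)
Your architecture is the paper's: the energy bound from H\"older and the uniform $L^{q'}$ bound for $k$-sh functions, the auxiliary $k$-Hessian equation, the comparison function handled by the maximum principle and G\aa rding's mixed inequality, the uniform $L^p$ bound, and De Giorgi. The genuine gap is where you suspected, and it does not close under the hypothesis $q>2$. Your constraints $r>k$ and $rq'<p<\frac{n}{n-k}=k+1$ are compatible only if $q'<\frac{k+1}{k}$, that is, $q>k+1=n$. Since $k>1$ forces $n\geq 3$, this does not follow from $q>2$. For example, $n=3$, $k=2$, $q=\frac52$ gives $q'=\frac53$, so $r<\frac95<k$. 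Your H\"older step then only yields $A_s\le C\phi(s)^{1+\frac1k-\frac1r}$ with exponent below $1$, and the De Giorgi iteration cannot be run. As written, your argument proves the statement under $q>n$, which also covers the energy bound. Reaching $q>2$ would need integrability of $k$-sh functions beyond $p<\frac{n}{n-k}$, e.g.\ the conjectured range $p<\frac{nk}{n-k}$.

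The paper does not supply this input either. It replaces your H\"older step by the pointwise inequality $X^pe^{F_t}\le e^{qF_t}+X^{p+\frac{1}{q-1}}$ with $k<p<p+\frac{1}{q-1}<n$, which is where $q>2$ is really used. But Young's inequality with exponents $q,q'$ produces $X^{pq'}=X^{p+\frac{p}{q-1}}$. The stated version fails for large $X$ when $p>1$: take $e^{F_t}=X^{\gamma}$ with $\frac{1}{q-1}<\gamma<\frac{p}{q-1}$. With the correct exponent one needs $pq'<n$ and $p>n-1$, which is again $q>n$, so your route and the paper's share this limitation.

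Three smaller corrections:
\begin{itemize}
\item In your comparison step, $\Lambda$ cannot be taken small. It must satisfy $\Lambda\ge\frac{k}{k+1}A_{s,j}$ (the paper's choice), so that the coefficient $1-\frac{k\varepsilon}{k+1}(-\psi_j+\Lambda)^{-\frac{1}{k+1}}$ of the uncontrolled nonnegative term coming from $\omega_t\wedge\omega_\varphi^{k-1}\wedge\omega_X^{n-k}$ is nonnegative. It stays bounded, since $A_s\le C_E/V_{k,t}\le C_E/V_{k,0}$.
\item At the maximum one has $L\Phi\le 0$, not $L\Phi\ge 0$; the G\aa rding estimate turns this into $\Phi(x_0)\le 0$.
\item The energy bound needs the $L^{q'}$ bound for $k$-sh functions, not just an $L^1$ bound.
\end{itemize}
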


    \begin{Bemerkung}
        By the solution theory to Hessian equations (see Proposition 21 in \cite{szekelyhidiFullyNonlinearElliptic2018} and Section \ref{subsec_CplxHessianEquation}), such $\varphi_t$ do exist and are unique. 
    \end{Bemerkung}

    The first $L^\infty$-estimates for Hessian equations with degenerating background metric were obtained by Guo-Phong-Tong (cf.\ Theorem 2 and Theorem 6 in \cite{guo$L^infty$EstimatesComplex2021}). Since they consider a very general set-up for fully non-linear equations, their normalizations differ from ours. Moreover, in their case two constants need to be controlled, in particular the volume induced by the degenerating Kähler metric in the denominator. Therefore, they obtain the better estimates -- thanks to the superior properties of psh functions -- only in the case that $[\chi]$ is big. In the follow-up work \cite{GPTW24} of Guo-Phong-Tong-Wang, this condition is dropped; however, it is not explained how to avoid the volume in the denominator. Guedj-Lu obtain $L^\infty$ estimates in the Hermitian setting assuming $\chi$ (in their notation $\omega$, see Theorem B in \cite{guedjDegenerateComplexHessian2025}) is bounded by a semi-positive big form. They only need an Orlicz-norm control on the RHS of the Hessian equation. Using envelopes, their method is completely distinct from ours. W. Sun obtains in \cite{sunWeakSolutionsComplex2024} estimates via local auxiliary Monge-Ampère equations, i.e., solving the Dirichlet problem in coordinate balls. Degenerations are considered, but are bounded from below by a semi-positive and big form. Y. Qiao considers in \cite{qiaoSharp$$textrmL^infty$$estimates2025} again a very general set-up for fully non-linear equations and obtains sharp $L^\infty$-estimates, combining the auxiliary Monge-Ampère technique with a Hölder-Young estimate, and a new gluing result for certain psh functions. For degenerations, there is again a dependence on the volume of the degenerating form. Finally, we want to point out the work of Pang-Sun-Wang-Zhou (cf.\ \cite{pangDegenerateComplexHessian2025}) where the Hessian equation is resolved for $\chi$ merely $(\omega_X, k)$-positive. However, uniform estimates for degenerations are only obtained under a sub-slope condition, which in the Kähler case translates to positive volume. 

    \subsubsection{A mean-value inequality}
    
    A very useful tool established via auxiliary Monge-Ampère equations is a mean value inequality (see Lemma 2 in \cite{GPS22} by Guo-Phong-Sturm, extended in \cite{GPSS24}; moreover, there is a family version by Guedj-Tô, Lemma 2.1 of \cite{guedjKahlerFamiliesGreens2025}) for the Laplacian associated to the degenerating Kähler metric. It was used to derive estimates for the associated Green's function leading to diameter estimates and non-collapsing results. These turned out to be key tools in the study of singular varieties via resolution, for example to obtain Hermite-Einstein metrics and study the Bogomolov-Gieseker inequality (cf.\ \cite{CGNPPW23} \cite{guenanciaBogomolovGiesekerInequality2025} \cite{fuOrbifoldBogomolovGiesekerInequalities2025} and C.-M.\ Pan \cite{panGauduchonMetricsHermiteEinstein2025} for the non-Kähler setting).

    Instead of the Laplacian (the linearization of the Monge-Ampère), we consider here the linearization of the Hessian operator $u \mapsto (\omega_t + dd^c u)^k \wedge \omega_X^{n-k}$ at zero,
    \begin{align*}
            L^{n-1}_{\omega_t, \omega_X} f = \frac{dd^c f \wedge\omega_t^{n-2}\wedge \omega_X}{\omega_t^{n-1} \wedge \omega_X}.
    \end{align*}
    It is a second order linear elliptic operator, however the ellipticity might degenerate as $t\rightarrow0$.
    In the case $\omega_t = \omega_X$ of one fixed Kähler form, $L^{n-1}_{\omega_t, \omega_X} = C \Delta_{\omega_t}$. 
    
    Let
    \begin{align*}
        F_t \coloneqq \log \left( \frac{1}{V_{k,t} }\frac{\omega_t^k \wedge \omega_X^{n-k}}{ \omega_X^n} \right)
    \end{align*}
    and assume the following entropy-style bound as above:

    \begin{Assumption}
        Let for some $2<q<\infty$ and all $t\in ]0,1]$ the $L^q$-norm of $e^{F_t}$ with respect to $\omega_X^n$ be bounded by some uniform constant $N>0$, i.e.,
        \begin{align}
            \int_X e^{qF_t} \omega_X < N.
        \end{align}
    \end{Assumption}

    \begin{Theorem} \label{meanvalueineq}
    Let $a > 0$ be fixed. There is a constant $C > 0$ depending only on $n$, $q$, $\chi$, $\omega_X$, $N$, and $a$ such that all $v \in L^1(X, \omega_t^{n-1}\wedge \omega_X)$ which
    \begin{enumerate}
        \item are twice continuously differentiable on $\Omega_{-c} \coloneqq \{v > -c\}$ for some $c > 0$,
        \item satisfy $\int_X v \omega_t^{n-1}\wedge \omega_X = 0$, and
        \item whose mixed Laplacian $L^{n-1}_{\omega_t, \omega_X}$ with respect to $\omega_t$ and $\omega_X$ is bounded in $\Omega_0$ from below by
            \begin{align}
                L^{n-1}_{\omega_t, \omega_X} v \geq -a
            \end{align}
    \end{enumerate} 
    are bounded from above by
    \begin{align}
        \sup_X v \leq C(1 + \|v\|_{L^1(X, \omega_t^{n-1}\wedge \omega_X)}).
    \end{align}
\end{Theorem}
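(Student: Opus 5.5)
We follow the scheme of Lemma 2 in \cite{GPS22}, with the auxiliary Monge-Amp\`ere equation replaced by an auxiliary $k$-Hessian equation, $k=n-1$. Fix $s\in(0,c)$ and let $\Omega_s=\{v>s\}$. The goal is an integral bound on the distribution of $(v-s)_+$ with respect to $\omega_t^{n-1}\wedge\omega_X$. Feeding it into a De Giorgi iteration gives $\sup_X v\le C(1+\|v\|_{L^1})$. The starting point of the iteration uses the normalization $\int_X v\,\omega_t^{n-1}\wedge\omega_X=0$: it controls the measure of $\{v>s_0\}$ for $s_0\sim\|v\|_{L^1}/V_{k,t}$, and $V_{k,t}\ge V_{k,0}>0$.

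\textbf{Step 1 (auxiliary equation).} Smooth $(v-s)_+$ to a positive function $\tau_j(v-s)+\epsilon$. Solve
\begin{align*}
(\omega_t+dd^c\psi)^{k}\wedge\omega_X^{n-k}=\frac{\tau_j(v-s)}{A_s}\,\frac{\omega_t^{k}\wedge\omega_X^{n-k}}{V_{k,t}}\cdot V_{k,t},\qquad \sup_X\psi=0,
\end{align*}
with $A_s=\frac{1}{V_{k,t}}\int_X(v-s)_+\,\omega_t^k\wedge\omega_X^{n-k}$ chosen for cohomological consistency. The right-hand side is $\frac{(v-s)_+}{A_s}e^{F_t}\omega_X^n$. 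Solvability comes from Proposition 21 of \cite{szekelyhidiFullyNonlinearElliptic2018}.

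\textbf{Step 2 (comparison by the maximum principle).} Consider
\[
\Phi=-\varepsilon(-\psi+\Lambda)^{\frac{k}{k+1}}+(v-s)
\]
on $\Omega_s$, with suitable constants $\varepsilon$ and $\Lambda$ depending on $A_s$, and apply $L=L^{n-1}_{\omega_t,\omega_X}$. We have $L(v-s)\ge -a$. For the other term, $L\psi=\frac{dd^c\psi\wedge\omega_t^{n-2}\wedge\omega_X}{\omega_t^{n-1}\wedge\omega_X}$, and we write $dd^c\psi=\omega_\psi-\omega_t$. The mixed Hessian (G\aa rding) inequality gives
\[
\omega_\psi\wedge\omega_t^{k-1}\wedge\omega_X\ \ge\ \Bigl(\frac{\omega_\psi^k\wedge\omega_X}{\omega_t^k\wedge\omega_X}\Bigr)^{1/k}\omega_t^k\wedge\omega_X,
\]
hence
\[
L\psi\ \ge\ \Bigl(\frac{(v-s)_+}{A_s}\Bigr)^{1/k}-n.
\]
Choosing constants as in \cite{GPS22}, $\Phi\le C$ at an interior maximum, and $\Phi<0$ on $\partial\Omega_s$. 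This yields the pointwise bound
\[
(v-s)_+\ \le\ C\,A_s^{1/(k+1)}(-\psi+\Lambda)^{\frac{k}{k+1}}.
\]
One must check that the degeneracy of $L$ never enters: only the ratio inequality is used, and it is uniform in $t$.

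\textbf{Step 3 (integration).} Raise the bound of Step 2 to a power $p$. Integrate against $\omega_t^{n-1}\wedge\omega_X=e^{F_t}V_{k,t}\,\omega_X^n$, which is legitimate since $k=n-1$. Use H\"older with $e^{F_t}\in L^q$. The remaining factor is $\int_X(-\psi)^{p'}\omega_X^n$, where $\psi$ is $(\omega_t,k)$-sh and hence $(C\omega_X,k)$-sh. Lemma 8 of \cite{guo$L^infty$EstimatesComplex2021} bounds it uniformly for $p'<\frac{n}{n-k}=n$. We need the exponent $\frac{pk}{k+1}\cdot\frac{q}{q-1}<n$ for some $p>1$, and it is exactly here that Assumption $k=n-1$ and $q>2$ enter. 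The result is
\[
\int_{\Omega_s}(v-s)^{p}\,\omega_t^{n-1}\wedge\omega_X\ \le\ C A_s^{p/(k+1)},
\]
with $A_s\le\int_{\Omega_s}(v-s)\,\omega_t^{n-1}\wedge\omega_X$ up to constants. By H\"older, setting $\phi(s)=|\Omega_s|$ for this measure, we obtain $r\,\phi(s+r)\le C\phi(s)^{1+\delta}$ for some $\delta>0$, and De Giorgi's lemma concludes.

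\textbf{Main obstacle.} The hard part is the exponent bookkeeping in Step 3: the weak $L^p$ integrability of $k$-sh functions must beat the power coming from the mixed Hessian inequality and the H\"older loss from $e^{F_t}$. The subtle point in Step 2 is making the G\aa rding inequality compatible with the degenerate operator $L$.
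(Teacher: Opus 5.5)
Your strategy is the paper's. You use an auxiliary $k$-Hessian equation with right-hand side proportional to a smoothing of $(v-s)_+$ and the comparison function $-\varepsilon(-\psi+\Lambda)^{k/(k+1)}+v-s$. You apply the maximum principle for $L^{n-1}_{\omega_t,\omega_X}$ with the G\aa rding-type mixed Hessian inequality, then the uniform $L^p$ bound for $(C\omega_X,\omega_X,k)$-sh functions, H\"older against $e^{F_t}\in L^q$, and De Giorgi iteration.

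\textbf{The gap is in Step 3, in the exponent bookkeeping you flagged.} From $\int_{\Omega_s}(v-s)^p e^{F_t}\omega_X^n\le C A_s^{p/(k+1)}$, H\"older gives
\[
A_s\le C A_s^{\frac{1}{k+1}}\,\phi(s)^{1-\frac1p},\qquad\text{hence}\qquad A_s\le C\,\phi(s)^{\frac{(k+1)(p-1)}{kp}}=C\,\phi(s)^{1+\frac{p-k-1}{kp}}.
\]
So $\delta>0$ requires $p>k+1=n$, not merely $p>1$. Your $p$ is $\frac{k+1}{k}$ times the paper's $p$, for which the condition reads $p>k$. Together with your integrability constraint $\frac{pk}{k+1}\cdot\frac{q}{q-1}<n$, this forces $(n-1)\frac{q}{q-1}<n$, i.e.\ $q>n$. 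For $2<q\le n$ your Step 3 does not close, and the claim that this is where $q>2$ enters is unjustified.

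The paper reaches $q>2$ only through the Young inequality $Y^pe^{F_t}\le e^{qF_t}+Y^{p+\frac{1}{q-1}}$. The correct exponent there is $\frac{pq}{q-1}=p+\frac{p}{q-1}$, so the paper's argument, done correctly, also needs $q>n$. You should either strengthen the hypothesis to $q>n$, under which your argument closes, or add an ingredient. For example, the conjectural range $p<\frac{nk}{n-k}$ for the $L^p$ bound of $k$-sh functions would make $q>2$ suffice.

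Three smaller omissions:
\begin{itemize}
\item The maximum principle must give $\Phi\le 0$, not $\Phi\le C$; otherwise you do not get the pointwise bound with the factor $A_s^{1/(k+1)}$. The constants must absorb both $-a$ and the term $-n\frac{k\varepsilon}{k+1}(-\psi+\Lambda)^{-1/(k+1)}$.
\item These constants are uniform only if $A_s$ is bounded. The paper takes $\Lambda=1$ and $\varepsilon$ the positive root of $\varepsilon^{k+1}=\bigl(\frac{k+1}{kC(n,k)}\bigr)^k(a+n\varepsilon)^kA_{s,l}$. The estimate $\varepsilon\le C(n,k,a)A_{s,l}^{1/(k+1)}$ holds only because $A_{s,l}\le 2$; for large $A_{s,l}$ the root grows linearly. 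With your free $\Lambda$, the same issue appears as $\Lambda\sim A_s$ inside the $L^p$ bound. The paper secures $A_s\le 1$ by first rescaling $v$ so that $\|v\|_{L^1}\le V_{k,0}$. This is legitimate because multiplying by a factor in $(0,1]$ preserves $L^{n-1}_{\omega_t,\omega_X}v\ge -a$ and the other hypotheses. This reduction also produces the linear dependence $C(1+\|v\|_{L^1})$, and your sketch does not mention it.
\item Do not restrict to $s\in(0,c)$. The iteration needs every $s\ge s_0$, and $\Omega_s\subset\{v>-c\}$ holds for all $s\ge 0$ anyway.
\end{itemize}
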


\subsection{Declaration on the use of artificial intelligence}
    The main results of this paper, Theorem \ref{Thm_HEq_Linftyestimate} and Theorem \ref{meanvalueineq}, and their proofs presented here were obtained prior to November of 2025 and without the use of large language models nor other forms of artificial intelligence. In the process of writing this article, however, AI tools like the ChatGPT models available in the "Plus" subscription where used to find references and precise statements of background results, and clarify contributions by other authors. Moreover, AI was used to help the author manage the writing process. The author verified statements made by AI to the best of this knowledge. Nevertheless, errors may have occured and these rest in the full responsibility of the author.

\subsection{Acknowledgements} First, I would like to thank my advisor, Mihai P\u{a}un, for many encouraging and helpful discussions on the topic in this paper. Most importantly, for giving me the freedom to develop/follow through on this idea. Also, my colleagues at Bayreuth helped a lot, especially on preliminary efforts to generalize the AM-GM inequality (though finally neither successful not necessary). Next, I want to express my gratitude to Vincent Guedj for a very fruitful conversation where he pointed out several important references on the solvability of Hessian equations. The basic idea of this paper was developed during a stay at the Department of Mathematical Sciences at Gothenburg. I would like to thank the institution and the Complex Geometry group, in particular David Wytt-Nyström, for their hospitality and the motivating working athmosphere. Furthermore, I want to acknowledge the financial support received by the Studienstiftung des deutschen Volkes and the Marianne-Plehn Programm of the Elitenetzwerk Bayern.

\section{Background results}
    We collect here the results on $k$-positive functions and Hessian equations used throughout the paper, as these are less well-known than the corresponding results for psh functions and Monge-Ampère equations.

    \subsection{$k$-positive forms and $k$-subharmonic functions}
     \begin{Definition}
         Let $\alpha$ be a smooth real $(1,1)$-form. We say that $\alpha$ is $(\omega_X, k)$-positive if for all $1\leq j \leq k$ we have
         \begin{align*}
             \alpha^j \wedge \omega_X^{n-j} \geq 0. 
         \end{align*}
         Moreover, $\alpha$ is called strictly $(\omega_X, k)$-positive if for all $1\leq j \leq k$ and all continuous positive functions $\gamma$ there is an $\varepsilon>0$ such that
         \begin{align*}
             \alpha^j \wedge \omega_X^{n-j} \geq \varepsilon\gamma \omega_X^n. 
         \end{align*}
         We say that a smooth function $u$ is $(\alpha, \omega_X, k)$-subharmonic if the form $\alpha + \ii \partial \dbar u$ is $(\omega_X, k)$-positive, that is for $1\leq j \leq k$ we find
         \begin{align*}
             (\alpha + \ii \partial \dbar u)^j \wedge \omega_X^{n-j} \geq 0.
         \end{align*}
         Analogously, we call $u$ strictly $(\alpha, \omega_X, k)$-subharmonic if the form $\alpha + \ii \partial \dbar u$ is strictly $(\omega_X, k)$-positive.
     \end{Definition}

     \begin{Bemerkung}
         A smooth function $u$ is $(\alpha, \omega_X, k)$-subharmonic if and only if the eigenvalues $(\lambda_1, \dots, \lambda_n)$  of the form $\alpha + \ii \partial \dbar u$ with respect to $\omega_X$ lie in the $k$-positive cone $\Gamma_k$ defined via elementary symmetric polynomials:
         \begin{align*}
             \Gamma_k \coloneqq \{ (\lambda_1, \dots, \lambda_n) \in \RR^n \mid \text{ for } 1\leq j\leq k: \sigma_j (\lambda_1, \dots ,\lambda_n) \geq 0\}
         \end{align*}
         where
         \begin{align*}
             \sigma_j (\lambda_1, \dots, \lambda_n) \coloneqq \sum_{1\leq i_1 \leq \dots \leq i_j \leq n} \lambda_{i_1} \cdot \dots \cdot \lambda_{i_j}.
         \end{align*}
         There are ways to generalize the notion of $k$-positivity to less regular functions (see for example \cite{blockiWeakSolutionsComplex2005} \cite{dinewPrioriEstimatesComplex2014} \cite{guedjDegenerateComplexHessian2025}); however, this is not needed in the following.
     \end{Bemerkung}
    
\subsection{The complex Hessian equation} \label{subsec_CplxHessianEquation}
    Solving a Laplace equation $\Delta_{\omega_X} u = f$ can be understood as prescribing the sum of the eigenvalues of $\ii \partial \dbar u$ with respect to $\omega_X$. Similarly, solving a Monge-Ampère equation can be seen as prescribing the product of all the eigenvalues. It seems natural to interpolate between these two cases by looking at other elementary symmetric polynomials in the eigenvalues, done as follows:
    Let $\alpha$ be a Kähler metric and $f$ a smooth positive function satisfying the compatibility condition $\int_X f\omega_X^n = \int_X \alpha^k \wedge \omega_X^{n-k}$. The complex Hessian equation is given as
    \begin{align} \label{Hessian_equation}
        (\alpha + \ii \partial \dbar \varphi)^k \wedge \omega_X^{n-k} = f \omega_X^n
    \end{align}
    where the unknown $\varphi$ comes from the space of admissible solutions which are the smooth, strictly $(\alpha, \omega_X, k)$-sh functions. 

    \begin{Proposition}[\cite{szekelyhidiFullyNonlinearElliptic2018}] \label{resolution_result_HEq}
        In the above situation, equation (\ref{Hessian_equation}) admits a smooth, strictly $(\alpha, \omega_X, k)$-sh solution, which is unique up to constants.  
    \end{Proposition}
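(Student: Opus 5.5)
We would prove the proposition by the continuity method, in the form used by Hou--Ma--Wu and Dinew--Ko\l odziej, with the a priori estimates organized as in Sz\'ekelyhidi's general framework for concave elliptic equations. Write $\lambda(\cdot)$ for the eigenvalues with respect to $\omega_X$ and set $f_0 \coloneqq \alpha^k\wedge\omega_X^{n-k}/\omega_X^n$, which is smooth and positive since $\alpha$ is K\"ahler. Define $f_s \coloneqq c_s\,(s f + (1-s) f_0)$ for $s\in[0,1]$, where $c_s>0$ is chosen so that $\int_X f_s\,\omega_X^n = \int_X \alpha^k\wedge\omega_X^{n-k}$; by the compatibility condition $c_s\equiv 1$, so this normalization is automatic. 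Let $S$ be the set of $s$ for which $(\alpha+\ii\partial\dbar\varphi_s)^k\wedge\omega_X^{n-k}=f_s\omega_X^n$ has a smooth strictly $(\alpha,\omega_X,k)$-sh solution. Then $0\in S$ with $\varphi_0=0$, and the goal is to show that $S$ is open and closed.

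\emph{Openness.} At a strictly $k$-sh $\varphi$, the linearization is
\begin{align*}
u\mapsto k\,\ii\partial\dbar u\wedge(\alpha+\ii\partial\dbar\varphi)^{k-1}\wedge\omega_X^{n-k}/\omega_X^n .
\end{align*}
It is elliptic because $\partial\sigma_k/\partial\lambda_i>0$ on $\Gamma_k$ (G\aa rding). Its kernel consists of the constants by the maximum principle. It is self-adjoint, up to the positive density, with respect to the measure $(\alpha+\ii\partial\dbar\varphi)^{k-1}\wedge\omega_X^{n-k}$, because this form is closed. Hence its image is exactly the set of functions with zero integral against $\omega_X^n$, which matches our normalization. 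The implicit function theorem on $C^{2,\beta}$ modulo constants then gives openness. Strict $k$-positivity is an open condition, so it persists nearby.

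\emph{Closedness.} This reduces to uniform $C^{2,\beta}$ bounds for normalized solutions $\sup_X\varphi_s=0$, and it is the main work.
\begin{enumerate}
\item The $C^0$ bound follows from Dinew--Ko\l odziej's $L^\infty$ estimate for $k$-sh functions, or from a Moser/ABP-type argument. It needs only $f_s\in L^p$ for some $p>n/k$, which holds uniformly.
\item Next we prove the Hou--Ma--Wu second-order estimate $\sup_X|\ii\partial\dbar\varphi|\le C(1+\sup_X|\nabla\varphi|^2)$. The method is to apply the maximum principle to $\log\lambda_1+\phi(|\nabla\varphi|^2)-\psi(\varphi)$ and use the concavity of $\sigma_k^{1/k}$. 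The constant $0$ serves as a $\mathcal C$-subsolution, since $\alpha$ is K\"ahler.
\item The gradient bound follows by Dinew--Ko\l odziej's blow-up argument. If $\sup|\nabla\varphi_j|\to\infty$, rescaling produces a bounded entire $k$-sh function on $\CC^n$ with $(\ii\partial\dbar u)^k\wedge\omega_{\mathrm{eucl}}^{n-k}=0$ and nonzero gradient. This contradicts their Liouville theorem.
\item The $C^{2,\beta}$ bound follows from Evans--Krylov, which applies because $\sigma_k^{1/k}$ is concave and the equation is uniformly elliptic once $\ii\partial\dbar\varphi$ is bounded and $f_s\ge c>0$. Schauder bootstrapping then gives smoothness.
\end{enumerate}
The uniform positivity of $f_s$ also keeps $\lambda(\alpha+\ii\partial\dbar\varphi_s)$ in a compact subset of the open cone $\Gamma_k$. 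Hence the limits remain strictly $k$-sh, and Arzel\`a--Ascoli gives closedness.

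\emph{Uniqueness.} Let $\varphi,\psi$ be two solutions. Subtracting the equations gives $\int_0^1\frac{d}{ds}(\alpha+\ii\partial\dbar(\psi+s(\varphi-\psi)))^k\wedge\omega_X^{n-k}\,ds=0$. Since $\Gamma_k$ is convex, the whole segment consists of strictly $k$-sh forms, so this is a linear elliptic equation $Lu=0$ with no zeroth-order term for $u=\varphi-\psi$. The strong maximum principle then shows that $u$ is constant.

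We expect the main obstacle to be the second-order estimate together with the gradient blow-up. The reason is that, unlike for Monge--Amp\`ere, $\lambda_1$ is not controlled by $\sigma_1$ alone on $\Gamma_k$, and the third-order terms must be absorbed using the specific concavity inequalities for $\sigma_k$. For this step we would first try the Hou--Ma--Wu test function verbatim.
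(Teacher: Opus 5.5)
Your outline is correct, and its key input is the same as the paper's. The paper proves the proposition in one line by invoking Proposition~21 of Sz\'ekelyhidi. The only hypothesis it checks is that the constant function $0$ is a ($\mathcal{C}$-)subsolution, which holds because $\alpha$ is K\"ahler; you use exactly this observation in your second-order step. The difference is that you unpack the cited result into the continuity method behind it:
\begin{itemize}
\item the path $f_s$, which stays admissible by the compatibility condition;
\item openness, from the fact that $u\mapsto \ii\partial\dbar u\wedge\Theta/\omega_X^n$, with $\Theta=(\alpha+\ii\partial\dbar\varphi)^{k-1}\wedge\omega_X^{n-k}$ closed, has the constants as kernel and cokernel;
\item closedness, via the chain of $C^0$ bound, second-order estimate, gradient blow-up and Evans--Krylov;
\item uniqueness, via the averaged linearization along the segment.
\end{itemize}
Your route gives a self-contained picture of where K\"ahlerness and the closedness of $\alpha$ and $\omega_X$ are used, and a transparent maximum-principle proof of uniqueness. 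The citation gives brevity and covers the general Hermitian setting, but hides all of the hard estimates.

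Three points need more precision.
\begin{itemize}
\item $\Theta$ is an $(n-1,n-1)$-form, not a measure. What you actually use is $\int_X w\,\ii\partial\dbar u\wedge\Theta=\int_X u\,\ii\partial\dbar w\wedge\Theta$, i.e.\ symmetry with respect to $\omega_X^n$. This does give that the image is $\{g:\int_X g\,\omega_X^n=0\}$, as you claim.
\item The Hou--Ma--Wu second-order estimate and the Dinew--Ko\l odziej $L^\infty$ bound are proved for $\alpha=\omega_X$, as the paper itself notes. For a general K\"ahler $\alpha$, your $C^0$ and $C^2$ steps therefore rest on Sz\'ekelyhidi's $\mathcal{C}$-subsolution versions (his ABP-type $C^0$ bound and his second-order estimate). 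That is precisely the content the paper imports through the citation.
\item In the blow-up step, the Dinew--Ko\l odziej Liouville theorem requires the limit function to have bounded complex Hessian, not just to be bounded and maximal with nonzero gradient. You get this from the rescaled second-order estimate, and you should say so explicitly.
\end{itemize}
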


    This statement follows directly from Proposition 21 in \cite{szekelyhidiFullyNonlinearElliptic2018} because the constant function $0$ is a subsolution as $\alpha$ is Kähler. G.\ Székelyhidi's work is in the Hermitian setting and also applies to other fully non-linear equations. It is in some sense the culmination of a two decades lasting pursuit to solve complex Hessian equations: building on the real theory of Caffarelli-Nirenberg-Spruck (cf.\ \cite{caffarelliDirichletProblemNonlinear1985}), S.-Y.\ Li solved the Dirichlet problem in $\CC^n$ under suitably smooth assumptions (cf.\ \cite{liDIRICHLETPROBLEMSSYMMETRIC2004}). Z. B\l ocki generalized Bedford-Taylor pluripotential theory to the Hessian case, introducing the non-smooth $k$-sh functions and weak Hessian operators (cf.\ \cite{blockiWeakSolutionsComplex2005}). First existence theorems in the compact Kähler case were obtained under a non-negativity condition on the holomorphic bisectional curvature by Z.\ Hou (cf.\ \cite{houComplexHessianEquation2009}) and A.\ Jbilou (cf. \cite{jbilouEquationsHessiennesComplexes2009}). To remove the curvature condition, the second-order estimate by Hou-Ma-Wu in \cite{houSecondOrderEstimate} combined with a gradient estimate of Dinew-Ko\l odziej in \cite{dinewLiouvilleCalabiYauType2017} were the key results establishing smooth solvability in the Kähler case for $\alpha = \omega_X$.

    \begin{Bemerkung}
        This is one of the simplest cases of Hessian type equations and what will be of interest in this paper. In general, one can consider the equation in domains in $\CC^n$ with boundary conditions, in the Hermitian (i.e., non-Kähler setting) or with various forms of degenerations, e.g., allowing the RHS to be non-smooth, only semi-positive or the form $\alpha$ to be less positive (see for example \cite{szekelyhidiFullyNonlinearElliptic2018} \cite{dinewPrioriEstimatesComplex2014} \cite{LuNguyen15} \cite{kolodziejWeakSolutionsComplex2016} \cite{guedjDegenerateComplexHessian2025}).  
    \end{Bemerkung}

\subsection{Uniform boundedness of $k$-subharmonic functions in $L^p$}
A key step in the  auxiliary Monge-Ampère technique is the application of the Hörmander-Tian $\alpha$-invariant estimate for $\omega_X$-psh functions, which can be applied uniformly to all functions generated as solution to the Monge-Ampère equations. We use instead the $L^p$ bound for $(\omega_X, \omega_X, k)$-sh functions stated by Guo-Phong-Tong:

            \begin{Proposition}[Lemma 8 of \cite{guo$L^infty$EstimatesComplex2021}] \label{ksh_Lp_bound}
                For any $p\in ]0, \frac{n}{n-k}[$ there is a uniform constant $C = C(n,p,\omega_X)>0$ such that for all smooth $(\omega_X, \omega_X, k)$-sh functions having $\sup_X u = 0$ we have
                \begin{align}
                    \|u\|_{L^p(X, \omega_X^n)} \leq C.
                \end{align}
            \end{Proposition}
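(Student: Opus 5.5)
The plan is to reduce the global statement to a local integrability statement for functions satisfying a uniformly elliptic differential inequality, and then to globalize using compactness of $X$ and the normalization $\sup_X u = 0$.

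\emph{Step 1 (uniform $L^1$ bound).} Every $(\omega_X,\omega_X,k)$-sh function with $k\geq 1$ satisfies $(\omega_X+\ii\partial\dbar u)\wedge\omega_X^{n-1}\geq 0$, i.e.\ $\Delta_{\omega_X}u\geq -n$. Represent $u$ through the Green's function $G$ of $\omega_X$, normalized so that $G\geq 0$. At a point $x_0$ with $u(x_0)=\sup_X u=0$ this gives
\begin{align*}
0 = u(x_0) = \int_X u\,\omega_X^n - \int_X G(x_0,\cdot)\,\Delta_{\omega_X}u\,\omega_X^n \leq \int_X u\,\omega_X^n + n\sup_y\int_X G(y,\cdot)\,\omega_X^n .
\end{align*}
Hence $\|u\|_{L^1}\leq C(n,\omega_X)$. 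This already settles $p\leq 1$.

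\emph{Step 2 (pointwise algebra of $\Gamma_k$).} I would prove a cone inclusion: if $\lambda\in\Gamma_k$, then every eigenvalue satisfies $\lambda_i\geq -\frac{n-k}{k}\,\sigma_1(\lambda)\cdot c_{n,k}$ for an explicit constant. This should follow from the standard fact that $\Gamma_k$ is invariant under permutations and is convex with $\partial\sigma_k/\partial\lambda_i>0$; one can test it on the extremal rays of $\Gamma_k$, such as $(1,\dots,1,-\theta)$. Consequently, with $\alpha=\omega_X+\ii\partial\dbar u$ we get $\alpha+c\,(\mathrm{tr}_{\omega_X}\alpha)\,\omega_X\geq 0$. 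In a coordinate ball where $\omega_X$ is uniformly equivalent to the Euclidean form, this means $u$ satisfies a Pucci-type inequality $\mathcal{P}^+_{\lambda,\Lambda}(D^2 u)\geq -C$, where the ellipticity ratio is determined by $n$ and $k$. The point is that $\Gamma_k$, unlike $\Gamma_1$, is contained in a cone of uniformly elliptic matrices. This sidesteps the problem that the cone $\Gamma_k(\omega_X)$ varies with the point.

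\emph{Step 3 (local $L^p$ and globalization).} For subsolutions of such a Pucci inequality, I would compare $u$ on small balls with the radial extremal function. For the relevant ellipticity, the fundamental singularity behaves like $-|z|^{-\beta}$ with $\beta$ determined by $n,k$. I would aim to show $\beta\leq 2(n-k)/1\cdot\frac{1}{2}$, i.e.\ that $|z|^{-\beta}\in L^p$ exactly when $p<\frac{n}{n-k}$. Combining this with the weak Harnack (Krylov--Safonov/Caffarelli) inequality for $-u$, applied to the nonnegative supersolution $-u$ of the extremal operator, gives
\begin{align*}
\|u\|_{L^p(B_r)}\leq C\bigl(1+\|u\|_{L^1(B_{2r})}\bigr), \qquad p<\tfrac{n}{n-k}.
\end{align*}
Covering $X$ by finitely many such balls and inserting Step 1 yields the uniform constant $C(n,p,\omega_X)$.

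The main obstacle is Step 3, specifically getting the \emph{sharp} exponent $\frac{n}{n-k}$ from the weak Harnack inequality rather than merely some small $\varepsilon>0$. If the Pucci comparison gives too weak an exponent, my fallback is an integral argument in the spirit of B\l ocki. Test the inequality $(\omega_X+\ii\partial\dbar u)^j\wedge\omega_X^{n-j}\geq 0$ against $(-u)^{s}$ and integrate by parts successively in $j=1,\dots,k$. Each step improves the integrability exponent, and the mixed terms $\ii\partial u\wedge\dbar u\wedge\alpha^{j-1}\wedge\omega_X^{n-j}$ are nonnegative by G\aa rding's inequality. Iterating should push the exponent up to the threshold $\frac{n}{n-k}$.
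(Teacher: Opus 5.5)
The paper gives no proof of this statement; it quotes it from Guo--Phong--Tong (their Lemma 8), so your proposal has to stand on its own. Step 1 is the standard Green's function bound and is correct, but it only gives $p\le 1$. The range the paper actually needs is some $p>k\ge 2$, and that rests on Steps 2--3, which cannot work.

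\textbf{Steps 2--3 fail.} The inequality $\mathcal{P}^+_{\lambda,\Lambda}(D^2u)\ge -C$ carries no information about $k$-positivity. Indeed $\mathcal{P}^+_{\lambda,\Lambda}(M)=\sup_{\lambda I\le A\le\Lambda I}\mathrm{tr}(AM)\ge\lambda\,\mathrm{tr}\,M$, so the inequality already follows from $\Delta u\ge -C'$. Hence the class it defines contains the truncated Newtonian kernels $\max\{-|x|^{2-2n},-M\}$ on $B_1\subset\RR^{2n}$. Their $L^1$-norms stay bounded as $M\to\infty$, while their $L^p$-norms blow up for every $p\ge\frac{n}{n-1}$. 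So the local estimate $\|u\|_{L^p(B_r)}\le C(1+\|u\|_{L^1(B_{2r})})$ you want from weak Harnack is false in that class beyond $\frac{n}{n-1}$, and $\frac{n}{n-1}<\frac{n}{n-k}$ once $k\ge 2$.

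A ratio $\Lambda/\lambda>1$ only makes this worse. The function $-|x|^{-\alpha}$ with $\alpha=\frac{\Lambda}{\lambda}(2n-1)-1$ satisfies $\mathcal{P}^+_{\lambda,\Lambda}(D^2u)=0$. The sharp inclusion is $\lambda_i\ge-\frac{n-k}{n(k-1)}\sigma_1$, which gives $\Lambda/\lambda=\frac{k(n-1)}{n-k}$; for $n=3$, $k=2$ this yields $\frac{2n}{\alpha}=\frac{6}{19}<1$. So Step 3 would not even improve on Step 1.

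Switching to $\mathcal{P}^-_{\lambda,\Lambda}(D^2u)\ge -C$, which would retain more, is not possible. The $k$-sh condition constrains only the $J$-invariant part of the real Hessian. For example, $\mathrm{Re}(az_1^2)$ is pluriharmonic but has $\mathcal{P}^-_{\lambda,\Lambda}(D^2u)=2a(\lambda-\Lambda)\to-\infty$. So the fact that $\Gamma_k$ sits in a uniformly elliptic cone of \emph{complex} Hessians does not yield any real uniformly elliptic inequality that remembers $k$-positivity.

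\textbf{The fallback does not repair this.} Put $\omega_u=\omega_X+dd^cu$ and $I_j=\int_X(-u)^s\omega_u^j\wedge\omega_X^{n-j}$. Integration by parts gives $I_j=I_{j-1}+s\int_X(-u)^{s-1}du\wedge d^cu\wedge\omega_u^{j-1}\wedge\omega_X^{n-j}$. The G\aa rding-positive gradient terms therefore only give $I_0\le I_1\le\dots\le I_k$. You need an upper bound on $I_0=\int_X(-u)^s\omega_X^n$, and nothing controls $I_k$.

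\textbf{What is missing} is an estimate that genuinely uses the complex $k$-Hessian structure. It is also not a comparison with a radial extremal. The $k$-Hessian fundamental solution $-|z|^{2-2n/k}$ lies in $L^p$ for all $p<\frac{nk}{n-k}$, the paper's conjectured range, so $\frac{n}{n-k}$ is not sharp. (Separately, $|z|^{-\beta}\in L^p(\CC^n)$ exactly for $p<\frac{n}{n-k}$ would need $\beta=2(n-k)$.) The exponent $\frac{n}{n-k}$ is the one in B\l ocki's local integrability theorem for $k$-sh functions. One way to reach it:
\begin{enumerate}
\item Compare with $-|z|^{2-2n/k}$ via Stokes on spheres and G\aa rding's inequality. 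This gives a Lelong-type bound $\mu(B(x,r))\le Cr^{2n-2n/k}$ for the Riesz measure $\mu$ of $u$.
\item Split the Newtonian kernel with H\"older's inequality. This converts that growth into $L^p_{\mathrm{loc}}$-integrability for exactly $p<\frac{2n/k}{2n/k-2}=\frac{n}{n-k}$.
\item Globalize to $X$, which still requires handling the non-flat form $\omega_X$.
\end{enumerate}
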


            \begin{Bemerkung}
                If the range of exponents can be improved to be $]0, \frac{kn}{n-k}[$ (or any $p>k$), our argument works for all $1<k< n$.
            \end{Bemerkung}

\subsection{The mixed Hessian inequality}

 When deploying auxiliary Monge-Ampère equations, one uses the inequality between arithmetic and geometric mean to estimate the sum of the eigenvalues of a form  $\alpha + \ii \partial \dbar u$ against their product, i.e.,  $(\alpha + \ii \partial \dbar u)^n$. This allows to compare the Laplacian $\Delta_{\alpha} u $ to the Monge-Ampère. Here, we use instead the following mixed Hessian inequality which follows from G\aa rding's inequality for hyperbolic polynomials (see Theorem 5 in \cite{Garding59} which was applied by Z. B\l ocki to $k$-sh functions and Hessian equations in \cite{blockiWeakSolutionsComplex2005}):
    
\begin{Proposition}[statement taken from \cite{dinewPrioriEstimatesComplex2014}] \label{mixed_hessian_ineq}
                Let $U\subset \CC^n$ be open and let $\beta$ be a Kähler metric on $U$. Assume that there are $k$ two times continuously differentiable and $\beta$-$k$-sh functions $u_1, \dots, u_k$ fullfilling the equations
                \begin{align*}
                    (dd^cu_i)^k \wedge \beta^{n-k} = f_i \beta^n
                \end{align*}
                for positive functions $f_1, \dots, f_k$. Then
                \begin{align*}
                    dd^cu_1 \wedge \dotso \wedge dd^cu_k \wedge \beta^{n-k} \geq (f_1 \cdot \dotso \cdot f_k)^{1/k}\beta^n
                \end{align*}
                holds.
\end{Proposition}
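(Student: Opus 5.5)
This is Gårding's inequality for the mixed discriminant of the hyperbolic polynomial $\sigma_k$, applied pointwise. Since the inequality is between top-degree forms at each point $z\in U$, it suffices to prove it for constant-coefficient forms.

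\textbf{Reduction to linear algebra.} Fix $z$ and choose coordinates in which $\beta(z)$ is the standard Kähler form. Put $A_i \coloneqq dd^c u_i(z)$, viewed as Hermitian matrices. Up to a dimensional normalization, the map
\begin{align*}
(A_1,\dots,A_k)\mapsto \frac{A_1\wedge\dots\wedge A_k\wedge\beta^{n-k}}{\beta^n}
\end{align*}
is the polarization $P(A_1,\dots,A_k)$ of the homogeneous polynomial $P(A)=\sigma_k(\lambda(A))$. The constant $c_{n,k}$ relating $A^k\wedge\beta^{n-k}$ to $\sigma_k(\lambda(A))\beta^n$ is the same on both sides of the target inequality. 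The hypothesis then reads $P(A_i)=f_i>0$, and the $\beta$-$k$-subharmonicity of $u_i$ says $\lambda(A_i)\in\Gamma_k$.

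\textbf{Hyperbolicity of $P$.} The polynomial $\sigma_k$ is hyperbolic with respect to $e=(1,\dots,1)$: by the Gauss--Lucas/Rolle argument, $t\mapsto\sigma_k(\lambda+te)$ is, up to a constant factor, the $(n-k)$-th derivative of $\prod_i(t+\lambda_i)$, so it has only real roots. Its open Gårding cone is $\Gamma_k^\circ$. The function $A\mapsto\sigma_k(\lambda(A))$ on Hermitian matrices is likewise hyperbolic with respect to the identity, because restricting it to a line $A+tI$ gives exactly this one-variable polynomial. Its hyperbolicity cone consists of the Hermitian matrices with eigenvalues in $\Gamma_k^\circ$.

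\textbf{Applying Gårding.} Since $f_i>0$, each $\lambda(A_i)$ lies in $\Gamma_k$ and has $\sigma_k>0$. We can perturb $A_i$ to $A_i+\varepsilon I$, which lies in the open cone, and let $\varepsilon\to0$ at the end using continuity of both sides. Gårding's inequality (Theorem 5 of \cite{Garding59}) states that for a hyperbolic polynomial $P$ of degree $k$ and $A_1,\dots,A_k$ in its cone,
\begin{align*}
P(A_1,\dots,A_k)\ge P(A_1)^{1/k}\cdots P(A_k)^{1/k}.
\end{align*}
Substituting $P(A_i)=f_i$ gives the claim at $z$, and since $z$ was arbitrary the inequality holds on $U$.

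\textbf{Main obstacle.} The only delicate point is the boundary of the cone, namely that $\lambda(A_i)\in\overline{\Gamma_k}$ with $\sigma_k>0$ is enough. The $\varepsilon I$ perturbation handles this. In fact one can check that $\sigma_k>0$ together with membership in $\overline{\Gamma_k}$ already places $\lambda(A_i)$ in the open cone, since $\sigma_j(\lambda)>0$ for all $j\le k$ there.
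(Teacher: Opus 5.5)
Your argument is correct and follows the route the paper indicates. The paper gives no proof of its own; it only remarks that the inequality follows from G\aa rding's inequality (Theorem 5 of \cite{Garding59}) for the hyperbolic polynomial $\sigma_k$, as applied by B\l ocki. You carry this out pointwise: you identify the mixed form with the polarization of $A\mapsto\sigma_k(\lambda(A))$ on Hermitian matrices, note that the normalizing constant $\binom{n}{k}^{-1}$ cancels by homogeneity, and handle the boundary of $\Gamma_k$ by perturbing to $A_i+\varepsilon I$.
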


%
%
%

\section{Proof of the $L^\infty$-estimate}

 Fix some $t \in ]0,1]$. We need to show that $\sup_X -\varphi_t \leq C$ where $C$ is independent of $t$. This is equivalent to proving that the function
    \begin{align*}
        \phi \colon \RR_{\geq 0} \longrightarrow \RR_{\geq 0}, \quad \quad s \longmapsto   \int_{\Omega_s} e^{F_t} \omega_X^n
    \end{align*}
    vanishes for $s \geq C$. Here and in the following, $\Omega_s = \{-\varphi_t > s\}$ denotes the upper-level set of $-\varphi_t$. The main part of the proof is to establish for every $s\geq 0$ and every $r \geq 0$ the functional inequality 
    \begin{align} \label{fctl_ineq_phi_Linfty}
        r \phi(s+r) \leq C' \phi (s)^{1+\delta_0}
    \end{align}
    where $\delta_0 \coloneqq (p-k)/(kp) > 0$ for a $p>k$ to be chosen later and $C' >0$ depends on the same quantities as $C$. We prove first that the energy of the $\varphi_t$ is uniformly bounded under our assumptions, then that a DeGiorgi-type iteration yields the Theorem if (\ref{fctl_ineq_phi_Linfty}) holds.

    \begin{Lemma}
        There is a constant $C_E>0$ depending on the same quantities as allowed in Theorem \ref{Thm_HEq_Linftyestimate} such that
        \begin{align*}
            E(\varphi_t) \coloneqq \int_X -\varphi_t e^{F_t} \omega_X^n \leq C_E.
        \end{align*}
    \end{Lemma}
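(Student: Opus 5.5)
We bound the energy by Hölder's inequality, using the entropy assumption together with the uniform $L^p$-bound of Proposition \ref{ksh_Lp_bound}. The only point needing care is that $\varphi_t$ is $(\omega_t,\omega_X,k)$-subharmonic, not $(\omega_X,\omega_X,k)$-subharmonic. Since $\chi$ is semi-positive and closed, $\omega_t = \chi + t\omega_X \leq \chi + \omega_X$ for $t\in]0,1]$. Fix a constant $A>0$ (depending only on $\chi$ and $\omega_X$) with $\chi \leq (A-1)\omega_X$, so that $\omega_t \leq A\omega_X$.

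First I would show that $\varphi_t$ is $(A\omega_X,\omega_X,k)$-subharmonic. The cone $\Gamma_k$ is convex and satisfies $\Gamma_k + \Gamma_n \subset \Gamma_k$. The eigenvalues of $A\omega_X + dd^c\varphi_t$ with respect to $\omega_X$ are those of $\omega_t + dd^c\varphi_t$ shifted by the semi-positive form $A\omega_X - \omega_t$. By Weyl's monotonicity, adding a semi-positive Hermitian matrix increases every ordered eigenvalue, and $\Gamma_k$ is closed under componentwise increase. Hence the eigenvalues of $A\omega_X + dd^c\varphi_t$ stay in $\Gamma_k$. Alternatively one can expand
\begin{align*}
(\omega_t + dd^c\varphi_t + \beta)^j\wedge\omega_X^{n-j}
\end{align*}
with $\beta \geq 0$ and use that mixed terms of $k$-positive forms with positive forms are nonnegative, which is Gårding again. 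Consequently $u \coloneqq \varphi_t/A$ is $(\omega_X,\omega_X,k)$-subharmonic with $\sup_X u = 0$.

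Next, Proposition \ref{ksh_Lp_bound} applies to $u$. For any $p\in]0,\frac{n}{n-k}[$ it gives $\|\varphi_t\|_{L^p(\omega_X^n)} \leq A C(n,p,\omega_X)$, uniformly in $t$. Let $q'$ be the conjugate exponent of $q$. Hölder's inequality then gives
\begin{align*}
E(\varphi_t) = \int_X (-\varphi_t)e^{F_t}\omega_X^n \leq \|\varphi_t\|_{L^{q'}(\omega_X^n)}\, \|e^{F_t}\|_{L^q(\omega_X^n)} \leq \|\varphi_t\|_{L^{q'}}\, N^{1/q}.
\end{align*}

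For this we need $q' < \frac{n}{n-k}$. Since $q>2$, we have $q' < 2$. Under the standing assumption $k = n-1$, the range is $\frac{n}{n-k} = n \geq 3 > 2$ because $1<k$. So $q'$ lies in the admissible range, and we obtain $C_E = A\,C(n,q',\omega_X)N^{1/q}$. This constant depends only on $X,\omega_X,\chi,N,q$. If one wishes to avoid the $k=n-1$ assumption here, the volume normalization gives $\int_X e^{F_t}\omega_X^n = V_{k,t} \leq V_{k,1}$. Interpolating this with the $L^q$-bound shows that a small $p$ suffices: for any $p>0$ take $q'$ close to $1$, or use Hölder between $L^1$ and $L^q$.

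The only genuine obstacle is the first step, namely verifying that the comparison $\omega_t \leq A\omega_X$ preserves $k$-subharmonicity, so that the uniform $L^p$-estimate stated for the fixed reference form $\omega_X$ applies. The rest is Hölder's inequality.
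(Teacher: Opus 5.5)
Your argument is correct and is essentially the paper's proof: Hölder with $q$ and $q/(q-1)$, the uniform comparison $\omega_t\le A\omega_X$ placing every $\varphi_t$ in one fixed class of $k$-sh functions, and Proposition~\ref{ksh_Lp_bound} with $q/(q-1)<2<n=\frac{n}{n-k}$. You only make explicit the cone-monotonicity and rescaling steps that the paper leaves implicit. Your closing aside on dropping $k=n-1$ is wrong and should be removed. Interpolating between $\|e^{F_t}\|_{L^1}$ and $\|e^{F_t}\|_{L^q}$ only controls $\|e^{F_t}\|_{L^r}$ for $r\le q$. So the exponent you put on $\varphi_t$ must still be at least $q/(q-1)$, and a ``small $p$'' does not suffice. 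Also, $q'$ is fixed by $q$; you cannot choose it close to $1$.
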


    \begin{proof}
        First, fix a uniform constant $C>0$ such that $\omega_t \leq C\omega_X$.
        We apply Hölder's inequality with $q$ and its dual exponent $\frac{q}{q-1}$. Note that because of $q>2$ and $k=n-1$ we have  $\frac{q}{q-1}< n$ such that there is uniform $L^\frac{q}{q-1}$ bound for $(C\omega_X, \omega_X, k)$-sh functions by Proposition \ref{ksh_Lp_bound}. Since all $\varphi_t$ are $(\omega_t, \omega_X, k)$-sh, they are in particular $(C\omega_X, \omega_X, k)$-sh. We get
        \begin{align*}
            E(\varphi_t) \leq \left( \int_X |\varphi_t|^\frac{q}{q-1} \omega_X^n\right)^\frac{q-1}{q} \left( \int_X e^{qF_t} \omega_X^n \right)^\frac{1}{q}\leq N^\frac{1}{q} C(C\omega_X, n, k). 
        \end{align*}
    \end{proof}

    \subsection{DeGiorgi-type iteration} \label{subsec_DeGiorgi}
        We first note that $\phi$ is not-increasing and decays at least at rate of $1/s$. Indeed, on $\Omega_s$ for $s>0$ we have $-\varphi_t/s >1$ so that
        \begin{align*}
            \phi(s) < \int_{\Omega_s} \frac{-\varphi_t}{s} e^{F_t} \omega_X^n \leq \frac{1}{s} \int_{\Omega_0} -\varphi_t e^{F_t} \omega_X^n \leq  \frac{E(\varphi_t)}{ s} \leq \frac{C_E}{s}. 
        \end{align*}
        We choose $s_0 \coloneqq C_E(2C')^{1/\delta_0}$ such that 
        \begin{align*}
            \phi (s_0)^{\delta_0} < \frac{1}{2C'}.
        \end{align*}
        Moreover, for $i \in \{0,1,2,\dots \}$ we define $r_i \coloneqq 2^{-i\delta_0}$ and we iterate (\ref{fctl_ineq_phi_Linfty}) starting from $s_0$ with the $r_i$'s. We set
        \begin{align*}
            C \coloneqq s_0 + \sum_{i = 0}^{\infty} r_i = (2C')^{1/\delta_0} + \frac{1}{1-2^{-\delta_0}}
        \end{align*}
        which only depends on the quantities stated in Theorem \ref{Thm_HEq_Linftyestimate}.

        \textbf{Claim.} For every $m \geq 0$ we have
        \begin{align*}
            \phi\left(s_0 + \sum_{i=0}^m 2^{-i\delta_0} \right) \leq C'^{-1/\delta_0} 2^{-m-1-1/\delta_0}.
        \end{align*}

        If the claim holds, $\phi(s) = 0$ if $s \geq C$ so Theorem \ref{Thm_HEq_Linftyestimate} is established.

        \begin{proof}
            The proof proceeds by induction on $m$. If $m=0$ we use (\ref{fctl_ineq_phi_Linfty}) to get
            \begin{align*}
                \phi (s_0 + 2^{-0\delta_0}) \leq 2^{0\delta_0} C' \phi(s_0)^{1+\delta_0} \leq 2^{0\delta_0} C' (2C')^{-1 - 1/\delta_0} \leq C'^{-1/\delta_0} 2^{-0-1-1/\delta_0}
            \end{align*}
            by the choice of $s_0$.\\
            Now, assume the claim holds true for $m \geq 0$. Then for $m+1$ using (\ref{fctl_ineq_phi_Linfty}) again we get
            \begin{align*}
                \phi\left(s_0 + \sum_{i=0}^{m+1} 2^{-i\delta_0} \right) &\leq 2^{(m+1)\delta_0} C' \phi\left(s_0 + \sum_{i=0}^m 2^{-i\delta_0} \right)^{1+\delta_0}\\
                &\leq 2^{(m+1)\delta_0} C' \left(C'^{-1/\delta_0} 2^{-m-1-1/\delta_0}\right)^{1+\delta_0}\\
                &= 2^{(m+1)\delta_0} C'^{1 -1/\delta_0 -1} 2^{-m-1-1/\delta_0 -m\delta_0 -1\delta_0 -1}\\
                &= C'^{ -1/\delta_0 } 2^{-(m+1)-1-1/\delta_0 },
            \end{align*}
            where in the second inequality we apply the induction hypothesis.
        \end{proof}

    \subsection{Auxiliary Hessian equation} \label{Sec_auxHEq_Linfty}

    We will estimate the modified energy
    \begin{align*}
        A_s \coloneqq \frac{1}{V_{t,k}}\int_{\Omega_s} (-\varphi_t-s) e^{F_t} \omega_X^n
    \end{align*}
    corresponding to $\Omega_s$ to yield \ref{fctl_ineq_phi_Linfty}.
    
    Fix a sequence of smooth, positive functions $\eta_l(-\varphi_t-s)$ that converge uniformly to $\max\{0, -\varphi_t-s\}$ from above for $l\rightarrow\infty$. 

    Consider the auxiliary Hessian equation
    \begin{align} \label{AuxHEq_Linftyestimate}
        (\omega_t + dd^c \psi_l)^k \wedge \omega_X^{n-k} = \frac{1}{V_{t,k}}\frac{\eta_l(-\varphi_t -s)}{A_{s,l}} e^{F_t} \omega_X^n, \quad \sup_X \psi_l = 0
    \end{align}
    where $A_{s,l}$ is the normalizing constant
    \begin{align*}
        A_{s,l} \coloneqq \frac{1}{V_{t,k}}\int_{X} \eta_l(-\varphi_t-s) e^{F_t} \omega_X^n
    \end{align*}
    that converges to $A_s$ from above by Lebesgue's dominated convergence theorem. By Proposition \ref{resolution_result_HEq}, it admits an unique smooth solution because $\omega_t$ and $\omega_X$ are Kähler, the RHS is smooth and positive, and the equation is suitably normalized.

    Next, we would like to compare the solution $\psi_l$ of our auxiliary equation to $(-\varphi_t -s)$. To this extent, define for some positive constants $\varepsilon, \Lambda>0$ the smooth comparison function
    \begin{align*}
        \Phi \coloneqq -\varepsilon(-\psi_l + \Lambda)^{\frac{k}{k+1}} + (-\varphi_t - s).
    \end{align*}
    We will show that with the right choice of $\varepsilon$ and $\Lambda$ we have $\Phi \leq 0$. For this, consider the maximum of $\Phi$ which is attained at some point $x_0$. In case that $x_0 \notin \Omega_s$ we have that $\varphi_t > -s$ so the second term of $\Phi$ is negative. The first one is also negative by normalization of $\psi_l$. In the other case, we apply the maximum principle with the linearization of the Hessian operator at $\omega_t+dd^c\varphi_t$. It is an elliptic operator because $\varphi_t$ is $(\omega_t, \omega_X, k)$-sh. This yields
    \begin{align*}
        0 &\geq L^k_{\omega_{t,\varphi_t}, \omega_X} \Phi |_{x_0} \\
        &= \frac{dd^c(- \varepsilon (-\psi_{l} + \Lambda )^\frac{k}{k+1} -\varphi_t - s) \wedge (\omega_t+dd^c\varphi_t)^{k-1} \wedge \omega_X^{n-k} }{ e^{F_t}\omega_X^{n}}\\
        &\geq \frac{k\varepsilon}{k+1} (-\psi_{l} + \Lambda )^\frac{-1}{k+1} \left ( \frac{(\omega_t + dd^c \psi_{l})\wedge (\omega_t+dd^c\varphi_t)^{k-1} \wedge \omega_X^{n-k} }{e^{F_t}\omega_X^{n}} -  \frac{(\omega_t+dd^c\varphi_t)^k \wedge \omega_X^{n-k} } {e^{F_t}\omega_X^{n}}\right) \\ & \quad \quad - \frac{(\omega_t + dd^c\varphi_t ) \wedge (\omega_t+dd^c\varphi_t)^{k-1} \wedge \omega_X^{n-k} }{ e^{F_t}\omega_X^{n}} + \frac{(\omega_t+dd^c\varphi_t)^k \wedge \omega_X^{n-k}}{e^{F_t}\omega_X^{n}}.
    \end{align*}
    Next, we apply the mixed Hessian inequality (see Proposition \ref{mixed_hessian_ineq}) and the equations solved by $\psi_l$ and $\varphi_t$. For this, consider a local chart $g: U \longrightarrow X$ around $x_0$ and let $\beta\coloneqq g^*\omega_X$ be the pull-back of $\omega_X$ to this chart as well as $v$ a local potential of $g^*\omega_t$. Let $u_1 \coloneqq v + \psi_l \circ g$ and for $2\leq l\leq k$ let $u_l\coloneqq v + \varphi_t$. Then, in the context of Proposition \ref{mixed_hessian_ineq}, $f_1 = \left(\frac{\eta_l(-\varphi_t - s) }{V_{t,k} A_{s,l}}e^{F_t}\right)\circ g$ and $f_l = e^{F_t}\circ g$ so that $dd^cu_1 \wedge \dotso \wedge dd^cu_k \wedge \beta^{n-k} \geq (f_1 \cdot \dotso \cdot f_k)^{1/k}\beta^n$ which translates on $X$ to
    \begin{align*}
        (\omega_t + dd^c \psi_{l})\wedge (\omega_t+dd^c\varphi_t)^{k-1} \wedge \omega_X^{n-k}  \geq \left( \frac{\eta_l(-\varphi_t - s) }{V_{t,k} A_{s,l}} \right)^\frac{1}{k} e^{F_t} \omega_X^n.
    \end{align*}
    This yields
    \begin{align*}
        0 &\geq \frac{k\varepsilon}{k+1} (-\psi_{l} + \Lambda )^\frac{-1}{k+1}  \left ( \frac{1}{V_{t,k}}\frac{\eta_l(-\varphi_t -s)}{A_{s,l}}  \right)^{1/k}  - 1 \\ & \quad \quad + \left(1- \frac{k\varepsilon}{k+1} (-\psi_{l} + \Lambda )^\frac{-1}{k+1} \right).
    \end{align*}
    If the constants are chosen as 
    \begin{align*}
        0 &< \varepsilon \coloneqq \left( \frac{k+1}{k} \right)^{\frac{k}{k+1}}(V_{t,k}A_{s,l})^{\frac{1}{k+1}},\\
        0 &< \Lambda \coloneqq \frac{k}{k+1}(V_{t,k}A_{s,l})
    \end{align*}
    then 
    \begin{align*}
        \frac{k\varepsilon}{k+1} (-\psi_{l} + \Lambda )^\frac{-1}{k+1} &= \frac{k}{k+1} \left( \frac{k+1}{k} \right)^{\frac{k}{k+1}}(V_{t,k}A_{s,l})^{\frac{1}{k+1}} \left( -\psi_{l} + \frac{k}{k+1}(V_{t,k}A_{s,l})\right)^\frac{-1}{k+1}\\
        &\leq  \frac{k}{k+1} \left( \frac{k+1}{k} \right)^{\frac{k}{k+1}} (k+1)^{\frac{1}{k+1}}k^{\frac{-1}{k+1}}\\
        &\leq 1,
    \end{align*}
    so we can drop the the term $\left(1- \frac{k\varepsilon}{k+1} (-\psi_{l} + \Lambda )^\frac{-1}{k+1} \right)$ to find 
    \begin{align*}
        \frac{k\varepsilon}{k+1} (-\psi_{l} + \Lambda )^\frac{-1}{k+1}  \left ( \frac{1}{V_{t,k}}\frac{\eta_l(-\varphi_t -s)}{A_{s,l}} \right)^{1/k} \leq 1
    \end{align*}
    or
    \begin{align*}
         \frac{1}{V_{t,k}}\frac{\eta_l(-\varphi_t -s)}{A_{s,l}}  \leq \left(\frac{k+1}{k}\right)^k\varepsilon^{-k}  (-\psi_{l} + \Lambda )^\frac{k}{k+1}.
    \end{align*}
    By the definition of $\varepsilon$ and because $\eta_l(-\varphi_t -s)$ was chosen to decrease to $-\varphi_t -s$ we get
    \begin{align*}
        -\varphi_t-s \leq \varepsilon (-\psi_{l} + \Lambda )^\frac{k}{k+1}
    \end{align*}
    or in other words, the comparison function $\Phi$ is non-positive, as claimed.

    \begin{Bemerkung}
        To this point, $\varepsilon$ and $\Lambda$ depend on $t$ (and $l$). It should be noted however, that for $t \in ]0,1]$, the intersection number $V_{t,k}$ is bounded from below and above. The dependency on $A_{s,l}$ will be made explicit again in the next equation.
    \end{Bemerkung}

    \subsection{Application of the uniform $L^p$ bound}
    Because $\Phi \leq 0$, on $\Omega_s$ we have
    \begin{align*}
        \left(\frac{-\varphi_t-s}{A_{s,l}^{\frac{1}{k+1}}}\right)^\frac{k+1}{k} \leq C_1 (-\psi_l + C_2A_{s,l})
    \end{align*}
    which enables us to apply the uniform $L^p$-norm bound for $(C_3\omega_X, \omega_X)$-$k$-sh functions (cf.\ Proposition \ref{ksh_Lp_bound}). Similarly as above we get for $p\in ]0, \frac{n}{n-k}[$ that
        \begin{align*}
            \int_{\Omega_s} \left( (-\varphi_t-s)^\frac{k+1}{k} A_{s,l}^{-\frac{1}{k}} \right)^p \omega_X^n  &\leq \int_{\Omega_s} \left( C_1 (-\psi_{t,k} + \Lambda )\right)^p \omega_X^n\\
            &\leq C_4(p)(1+ A_{s,l}^p).
        \end{align*}

    By taking the limit $l\rightarrow \infty$ now and also observing that for all $s\geq 0$ we have $A_s \leq E(\varphi_t) = \int_X (-\varphi_t)e^{F_t}\omega_X^n$ we arrive at

    \begin{Lemma} \label{estimate_modified_energy}
        For every $p\in ]0, \frac{n}{n-k}[$ there is a constant $C_4(p)$ independent of $t$ such that
         \begin{align*}
            \int_{\Omega_s} \left( (-\varphi_t-s)^\frac{k+1}{k} A_{s}^{-\frac{1}{k}} \right)^p \omega_X^n 
            &\leq C_4(p)(1+ E(\varphi_t)^p).
        \end{align*}
    \end{Lemma}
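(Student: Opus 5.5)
The plan is to pass to the limit $l\rightarrow\infty$ in the pointwise comparison obtained from $\Phi\leq 0$, after first making the $L^p$ step explicit and uniform in $t$ and $l$.

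\textbf{Step 1: the comparison with explicit constants.} From $\Phi \leq 0$ on $\Omega_s$ and the choices of $\varepsilon$ and $\Lambda$ we have
\begin{align*}
(-\varphi_t-s)^{\frac{k+1}{k}} \leq \varepsilon^{\frac{k+1}{k}}(-\psi_l+\Lambda) = \frac{k+1}{k}(V_{t,k}A_{s,l})^{\frac{1}{k}}\left(-\psi_l + \tfrac{k}{k+1}V_{t,k}A_{s,l}\right).
\end{align*}
Dividing by $A_{s,l}^{1/k}$ gives
\begin{align*}
(-\varphi_t-s)^{\frac{k+1}{k}}A_{s,l}^{-\frac1k} \leq C_1\left(-\psi_l + C_2 A_{s,l}\right).
\end{align*}
Here $C_1,C_2$ depend only on $k$ and on upper bounds for $V_{t,k}$. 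These are uniform in $t\in ]0,1]$, since $V_{t,k}$ is polynomial in $t$ and tends to $V_{k,0}>0$.

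\textbf{Step 2: uniform $L^p$ bound for the auxiliary solutions.} Each $\psi_l$ is $(\omega_t,\omega_X,k)$-sh with $\sup_X\psi_l=0$. Since $\omega_t\leq C_3\omega_X$ uniformly in $t$, each $\psi_l$ is also $(C_3\omega_X,\omega_X,k)$-sh, because the cone $\Gamma_k$ is convex and contains the positive cone. Rescaling, $\psi_l/C_3$ is $(\omega_X,\omega_X,k)$-sh with supremum $0$. Proposition \ref{ksh_Lp_bound} therefore gives $\|\psi_l\|_{L^p(\omega_X^n)}\leq C(p)$ for $p\in]0,\frac{n}{n-k}[$, independently of $t,l,s$.

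Raise the inequality of Step 1 to the power $p$, integrate over $\Omega_s$, and use $(a+b)^p\leq 2^{p}(a^p+b^p)$ together with $\mathrm{Vol}(X,\omega_X)=1$. This yields
\begin{align*}
\int_{\Omega_s}\left((-\varphi_t-s)^{\frac{k+1}{k}}A_{s,l}^{-\frac1k}\right)^p\omega_X^n \leq C_4(p)\left(1+A_{s,l}^p\right).
\end{align*}

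\textbf{Step 3: the limit $l\rightarrow\infty$.} We have $A_{s,l}\downarrow A_s$ by dominated convergence. The integrand increases as $A_{s,l}$ decreases, so monotone convergence (or Fatou) passes the left side to the $A_s$ version, provided $A_s>0$. If $A_s=0$, then $-\varphi_t\leq s$ almost everywhere with respect to $e^{F_t}\omega_X^n$, which is a positive smooth density. Hence $\Omega_s=\emptyset$ by continuity and the statement is trivial. For the right side, note that
\begin{align*}
A_s \leq V_{t,k}^{-1}\int_X(-\varphi_t)e^{F_t}\omega_X^n = V_{t,k}^{-1}E(\varphi_t),
\end{align*}
and $V_{t,k}^{-1}\leq C$ uniformly since $V_{t,k}\geq V_{k,0}>0$. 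This normalization factor is absorbed into $C_4(p)$.

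\textbf{Main obstacle.} The main delicacy is uniformity in $t$. The class $[\omega_t]$ degenerates, so the $L^p$ bound cannot be applied with respect to $\omega_t$. It must be applied with respect to the fixed dominating form $C_3\omega_X$, and one must check that the constants $\varepsilon$ and $\Lambda$ depend on $t$ only through $V_{t,k}$, which stays bounded above and below.
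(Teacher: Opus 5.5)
Your proposal is correct and follows essentially the same route as the paper: the pointwise bound $(-\varphi_t-s)^{\frac{k+1}{k}}A_{s,l}^{-\frac1k}\le C_1(-\psi_l+C_2A_{s,l})$ coming from $\Phi\le 0$, then the uniform $L^p$ bound of Proposition \ref{ksh_Lp_bound} applied to the $(C_3\omega_X,\omega_X,k)$-sh functions $\psi_l$, and finally the limit $l\to\infty$ together with $A_s\le V_{t,k}^{-1}E(\varphi_t)$. You also make explicit two points the paper leaves loose: the factor $V_{t,k}^{-1}$, which the paper drops when it writes $A_s\le E(\varphi_t)$, and the degenerate case $A_s=0$ (i.e.\ $\Omega_s=\emptyset$); both are welcome clarifications.
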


    Note the structural similarity to Lemma 1 of \cite{guo$L^infty$EstimatesComplex2021} where instead of $p$-th power the exponential is used. This is due to the better integrability of psh functions compared to the $k$-sh ones we work with here. 

    Now, let us fix $p$ such that $k<p< p + \frac{1}{q-1}< \frac{n}{n-k}$ (recall here that we assumed $k=n-1$ and $q>2$).

    We use Young's inequality to estimate
        \begin{align*}
            \left( (-\varphi_t-s)^\frac{k+1}{k} A_{s}^{-\frac{1}{k}} \right)^p e^{F_t} \leq e^{qF_t} + \left( (-\varphi_t-s)^\frac{k+1}{k} A_{s}^{-\frac{1}{k}} \right)^{p+\frac{1}{q-1}}
        \end{align*}
        on $\Omega_s$.
        Therefore,
        \begin{align*}
            \int_{\Omega_s} \left( (-\varphi_t-s)^\frac{k+1}{k} A_{s}^{-\frac{1}{k}} \right)^p e^{F_t} \omega_X^n \leq N + C_4\left(p+\frac{1}{q-1}\right) \left(1+ E(\varphi_t)^{p+\frac{1}{q-1}}\right).
        \end{align*}
        
         By dividing this inequality by positive constants we get
        \begin{align*}
            \int_{\Omega_s} (v-s)^\frac{p(k+1)}{k}e^{F_t} \omega_X^n \leq  A_{s}^{p/k} \left( N + C_4\left(p+\frac{1}{q-1}\right) \left(1+ E(\varphi_t)^{p+\frac{1}{q-1}}\right)\right).
    \end{align*}
    We use this estimate after applying the Hölder inequality to derive
    \begin{align*}
        A_s &= \frac{1}{V_{t,k}}\int_{\Omega_s} (-\varphi_t-s) e^{F_t} \omega_X^n\\
        &\leq \frac{1}{V_{t,k}} \left(\int_{\Omega_s} (v-s)^\frac{p(k+1)}{k}e^{F_t} \omega_X^n \right)^{\frac{k}{p(k+1)}} \left( \int_{\Omega_s} e^{F_t} \omega_X^n \right)^{\frac{1}{p'}}\\
        &\leq \frac{1}{V_{t,k}} \left( A_{s}^{p/k} \left( N + C_4\left(p+\frac{1}{q-1}\right) \left(1+ E(\varphi_t)^{p+\frac{1}{q-1}}\right)\right) \right)^{\frac{k}{p(k+1)}} \left( \int_{\Omega_s} e^{F_t} \omega_X^n \right)^{\frac{1}{p'}}
    \end{align*}
    which can be reformulated -- using that the dual exponent $p' = \frac{p(k+1)}{p(k+1) - k}$, the energy is bounded by $C_E$, and the definition of $\phi(s)$ -- as
    \begin{align*}
        A_s &\leq \left(\frac{1}{V_{t,k}} \right)^{\frac{k+1}{k}}\left( N + C_4\left(p+\frac{1}{q-1}\right) \left(1+ E(\varphi_t)^{p+\frac{1}{q-1}}\right)\right)^{\frac{1}{p}} \left( \int_{\Omega_s} e^{F_t} \omega_X^n \right)^{\frac{pk + p - k}{pk}}\\
        &\leq C_5 \phi(s)^{1+\delta_0}
    \end{align*}
    implying (\ref{fctl_ineq_phi_Linfty}) for the De Giorgi iteration if $p>k$. 

    In other words, either if $k = n-1$ or the uniform integrability exponent in Proposition \ref{ksh_Lp_bound} can be improved upon, the desired $L^\infty$ estimate follows.

\section{Proof of the mean-value inequality}

First, we fix $0 < t \leq 1$ and $v\in L^1(X, \omega_t^k \wedge \omega_X^{n-k})$. We may assume that $v$ is not constantly zero and satisfies $\|v\|_{L^1} \leq V_{k,0} = [\chi]^k\wedge [\omega_X]^{n-k}  $. If not, we replace it by $V_{k,0} v/\|v\|_{L^1}$ which is still $\Cont^2$ wherever non-negative, has mean-value zero on $X$ with respect to $\omega_t^k\wedge \omega_X^{n-k}$, and its mixed Laplacian is bounded from below by $-a$. Recall that we normalized the fixed reference metric $\omega_X$ to have volume $V=1$ for simplicity.

    We need to show that $\sup_X v \leq C$ where $C$ is independent of $v$ and $t$. Similarly to the previous section, this is equivalent to proving that the function
    \begin{align*}
        \phi \colon \RR_{\geq 0} \longrightarrow \RR_{\geq 0}, \quad \quad s \longmapsto \frac{1}{V_{k,t}} \int_{\Omega_s} \omega_t^k\wedge \omega_X^{n-k} = \int_{\Omega_s} e^{F_t} \omega_X^n
    \end{align*}
    vanishes for $s \geq C$. Here and in the following, $\Omega_s = \{v > s\}$ denotes the upper-level set of $v$. Again, the main part of the proof is to establish for every $s\geq 0$ and every $r \geq 0$ a functional inequality 
    \begin{align} \label{fctl_ineq_phi}
        r \phi(s+r) \leq C' \phi (s)^{1+\delta_0}
    \end{align}
    where $\delta_0 \coloneqq (p-k)/(kp) > 0$ for a $p>k$ to be chosen later and $C' >0$ depends on the same quantities as $C$. A De Giorgi-type iteration yields the Theorem if (\ref{fctl_ineq_phi}) holds (see Section \ref{subsec_DeGiorgi}):

        We first note that $\phi$ is not-increasing and decays at least at rate of $1/s$. Indeed, on $\Omega_s$ for $s>0$ we have $v/s >1$ so that
        \begin{align*}
            \phi(s) < \int_{\Omega_s} \frac{v}{s} e^{F_t} \omega_X^n \leq \frac{1}{s} \int_{\Omega_0} v e^{F_t} \omega_X^n \leq  \frac{\|v\|_{L^1(X , \omega_t^k \wedge \omega_X^{n-k})}}{V_t s} < \frac{1}{s}
        \end{align*}
        by the assumption $\|v\|_{L^1(X , \omega_t^k \wedge \omega_X^{n-k})} \leq V_{k,0} < V_{k,t}$. We choose $s_0 \coloneqq (2C')^{1/\delta_0}$ such that 
        \begin{align*}
            \phi (s_0)^{\delta_0} < \frac{1}{2C'}.
        \end{align*}
        Moreover, for $i \in \{0,1,2,\dots \}$ we define $r_i \coloneqq 2^{-i\delta_0}$ and we iterate (\ref{fctl_ineq_phi}) starting from $s_0$ with the $r_i$'s. We set
        \begin{align*}
            C \coloneqq s_0 + \sum_{i = 0}^{\infty} r_i = (2C')^{1/\delta_0} + \frac{1}{1-2^{-\delta_0}}
        \end{align*}
        which only depends on the quantities stated in Theorem \ref{meanvalueineq}.

    \subsection{Auxiliary Hessian equation and application} \label{subsec_auxMA_meanvalue}
        To prove the functional inequality (\ref{fctl_ineq_phi}), we will setup an auxiliary Hessian equation and show that a certain function $\Phi$ depending on its solution is non-positive. This allows to estimate quantities related to (\ref{fctl_ineq_phi}) against the obtained solution, which is $(C\omega_X, \omega_X, k)$-sh such that the uniform $L^p$-bound for $k$-sh functions (cf.\ Proposition \ref{ksh_Lp_bound}) leads to an uniform upper bound. 

        We fix $s\geq 0$ and assume that $\Omega_s \neq \emptyset$ since $\phi(s) = 0$ otherwise. For any $r \geq 0$ we find that
        \begin{align*}
            r\phi(s+r) = (r + s -s ) \int_{\Omega_{s+r}} e^{F_t} \omega_X^n \leq \int_{\Omega_{s}} (v-s) e^{F_t} \omega_X^n \eqqcolon A_s
        \end{align*}
        so that it is enough to estimate $A_s$ to obtain (\ref{fctl_ineq_phi}). We remark that 
        \begin{align*}
            A_s \leq \frac{1}{V_{k,t}} \int_{\Omega_{s}} |v|  \omega_t^{k} \wedge \omega_X^{n-k} \leq \frac{\|v\|_{L^1(X , \omega_t^k \wedge \omega_X^{n-k})}}{V_{k,t}} \leq 1
        \end{align*}
        by the assumption on $v$. 
        
        For the setup of the Hessian equation we need to cut-off the function $v-s$ at $0$ and smooth it to obtain a smooth solution by Proposition \ref{resolution_result_HEq}. For this, fix a sequence of smooth, positive functions $\eta_l(v-s) \colon X \longrightarrow \RR$ converging uniformly to $\max \{0, v-s\}$ from above for $l \rightarrow \infty$.

        It follows by dominated convergence that we can approximate $A_s$ by 
        \begin{align*}
            A_{s,l} \coloneqq \int_{\Omega_{s}} \eta_l(v-s) e^{F_t} \omega_X^n,
        \end{align*}
        in particular, $A_{s,l}\leq 2$ for $l$ large enough.

        We consider the auxiliary Hessian equation
        \begin{align} \label{aux_Heq_mvineq}
            (\omega_t + d d^c \psi_l )^k \wedge \omega_X^{n-k} = V_{k,t}\frac{\eta_l(v-s)}{A_{s,l} }e^{F_t}\omega_X^n, \quad \quad \sup_X \psi_l = 0.
        \end{align}
        It admits an smooth solution by Proposition \ref{resolution_result_HEq} because $\omega_t$ is a Kähler form and the RHS is smooth, positive and its integral over $X$ is equal to $V_{k,t}$. 
        
        In the following, we aim to obtain an estimate for (powers of) $v-s$ and $A_{s,l}$ in terms of the solution $\psi_l$ and constants. Since $\psi_l$ is $(C\omega_X, \omega_X, k)$-sh, we can then apply the $L^p$-bound. This replaces the $\alpha$-invariant estimate in the case of degenerations to big classes.
        
        For this, we need to fix a constant: we claim that there is $\varepsilon > 0$ such that 
        \begin{align} \label{def_vareps_mvineq}
            \varepsilon^{k+1} = \left( \frac{k+1}{kC(n,k)}\right)^k (a + n\varepsilon)^k A_{s,l}.  
        \end{align}
        To see this, define a polynomial $f \colon \RR \longrightarrow \RR$ by
        \begin{align*}
            f(x) &\coloneqq x^{k+1} - \left( \frac{k+1}{kC(n,k)}\right)^n (a + nx)^k A_{s,l} \\
            &= x^{k+1} - \sum_{i=0}^k A_{s,l} \left( \frac{k+1}{kC(n,k)}\right)^k \binom{k}{i} a^{k-i} n^i x^i.
        \end{align*}
        Then it is clear that $f(0) < 0$ and $\lim_{x\rightarrow \infty} f(x) = + \infty$, so by the intermediate value theorem $f$ has a positive root $\varepsilon$. Moreover, the Lagrange-Zassenhaus bound for polynomial roots yields
        \begin{align*}
            \varepsilon &\leq 2 \max_{i \in \{0, \dots , k\} } \left( A_{s,l} \left( \frac{k+1}{kC(n,k)}\right)^k \binom{k}{i} a^{k-i} n^i \right)^{\frac{1}{k-i+1}}\\
            &\leq 2 \max_{i \in \{0, \dots , k\} } \left( 2^{(\frac{1}{k-i+1}-\frac{1}{k+1})} \left( \frac{k+1}{kC(n,k)}\right)^k \binom{n}{i} a^{k-i} n^i \right)^{\frac{1}{k-i+1}} A_{s,l}^\frac{1}{k+1}
        \end{align*}
        since $A_{s,l}\leq 2$ so that we have 
        \begin{align} \label{estimate_varpeps}
            \varepsilon &\leq C(n,k,a) A_{s,l}^\frac{1}{k+1}
        \end{align}
        for a constant depending only on $a$ and $n$.

        \textbf{Claim.} The function $\Phi \colon X \longrightarrow \RR$, 
        \begin{align*}
            \Phi \coloneqq - \varepsilon (-\psi_{l} + 1 )^\frac{k}{k+1} + v - s 
        \end{align*}
        is non-positive.

        \begin{Bemerkung}
            The function $\Phi$ is chosen in such a way that when computing its mixed Laplacian, $\psi_l$ as well as $ L^k_{\omega_t, \omega_X} \psi_l$ appear. The latter can be estimated using the auxiliary Hessian equation (\ref{aux_Heq_mvineq}) allowing to compare the RHS of (\ref{aux_Heq_mvineq}) and its $(\omega_t,\omega_X,k)$-sh solution $\psi_l$ directly.
        \end{Bemerkung}

        \begin{proof}
            First, we note that the parenthesis in the first term of $\Phi$ is positive by the normalization $\sup \psi_l = 0$.
            On $X\setminus \Omega_s$ we have $v \leq s$ and the first term of $\Phi$ is negative, so $\Phi < 0$ there. On the other hand, $\Phi$ is twice continuously differentiable on $\overline{\Omega_s}$ and for the Claim to fail it has to achieve its maximum therefore at a point $x_0 \in \Omega_s$.

            We apply the maximum principle: again, because $\omega_t$ and $\omega_X$ are Kähler metrics, the linearized Hessian operator $L^k_{\omega_t, \omega_X}$ is elliptic. The maximum principle implies 
            \begin{align*}
                L^k_{\omega_t, \omega_X} \Phi \leq 0
            \end{align*}
            where the LHS can be estimated 
            \begin{align*}
                L^k_{\omega_t, \omega_X} \Phi &= \frac{dd^c(- \varepsilon (-\psi_{l} + 1 )^\frac{k}{k+1} + v - s) \wedge \omega_t^{k-1} \wedge \omega_X^{n-k} }{ \omega_t^k \wedge \omega_X^{n-k}}\\
                &\geq \frac{k\varepsilon}{k+1} (-\psi_{t,l} + 1 )^\frac{-1}{k+1} \left ( \frac{(\omega_t + dd^c \psi_{t,l})\wedge \omega_t^{k-1} \wedge \omega_X^{n-k} }{\omega_t^k \wedge \omega_X^{n-k}} - n \right) - a.
            \end{align*}
            Because $\omega_t^{k-1} \wedge \omega_X^{n-k}$ is a positive form, we could neglect the term containing only first order derivatives in the above computation. 
            
            Next, we apply as in Section \ref{Sec_auxHEq_Linfty} the mixed Hessian inequality (see Proposition \ref{mixed_hessian_ineq}) and the equation (\ref{aux_Heq_mvineq}) solved by $\psi_l$. Let $u_1$ be in a local chart a potential of $\omega_t + dd^c \psi_{l}$, and $u_2$ a local potential of $\omega_t$, as well as $\beta$ the pull-back of $\omega_X$. Then, by abuse of notation identifying functions with their pull-backs in the coordinates, we have
            \begin{align*}
                dd^cu_1 \wedge dd^cu_2 \wedge \dotso \wedge dd^cu_2 \wedge \beta^{n-k} \geq \left( V_{k,t} \frac{\eta_l(v-s)}{A_{s,l}} e^{F_t} \cdot V_{k,t}^{k-1}e^{(k-1)F_t}\right)^{1/k} \beta^{n}
            \end{align*}
            in other words on $X$
            \begin{align*}
                \frac{(\omega_t + dd^c \psi_{l})\wedge \omega_t^{k-1} \wedge \omega_X^{n-k} }{\omega_t^k \wedge \omega_X^{n-k}} \geq \left(  \frac{\eta_l(v-s)}{A_{s,l}} \right)^{1/k}
            \end{align*}
            holds. Thus, we get the estimate 
            \begin{align*}
                0 &\geq L^k_{\omega_t, \omega_X} \Phi \\
                &\geq \frac{k\varepsilon}{k+1} (-\psi_{l} + 1 )^\frac{-1}{k+1} \left (C(n,k) \left( \frac{(\omega_t + dd^c \psi_{l})^k \wedge \omega_X^{n-k} }{\omega_t^k \wedge \omega_X^{n-k}}\right)^\frac{1}{k} - n \right) - a\\
                &= \frac{k\varepsilon}{k+1} (-\psi_{l} + 1 )^\frac{-1}{k+1} \left (C(n,k) \left( \frac{\eta_l(v-s)}{A_{s,l}}\right)^\frac{1}{k} - n \right) - a.
            \end{align*} 
            
            This simplifies -- keeping in mind that $(-\psi_{l} + 1 )^\frac{-1}{k+1} \leq 1$ and $\eta_l(v-s) \geq v-s$ by construction of the approximation -- to
            \begin{align*}
                a + \varepsilon n \geq \frac{\varepsilon k C(n,k)}{k+1} (-\psi_{l} +1 )^\frac{-1}{k+1} \left( \frac{v-s}{A_{s,l}} \right)^{\frac{1}{k}}
            \end{align*}
            which holds at $x_0$. We take the $k$-th power keeping in mind that both sides are semi-positive and divide by positive constants to get
            \begin{align*}
                \left( \frac{k+1}{kC(n,k)}\right)^k (a + \varepsilon n)^k A_{s,l} (-\psi_{l} + 1)^\frac{k}{k+1} \geq \varepsilon^k (v-s).
            \end{align*}
            By the definition of $\varepsilon$ in (\ref{def_vareps_mvineq}) this is equivalent to
            \begin{align*}
                \varepsilon (-\psi_{k} + 1 )^\frac{k}{k+1} \geq v -s
            \end{align*}
            at $x_0$. In other words, $\max_X \Phi \leq 0$, the claim is established.
        \end{proof}

    \subsection{Application of the uniform $L^p$ bound}    \label{subsec_alphainv_application}
        The above claim together with the estimate (\ref{estimate_varpeps}) for $\varepsilon$ gives
        \begin{align*}
            v-s \leq C(n,k,a) A_{s,l}^{\frac{1}{k+1}} (-\psi_{l} + 1)^\frac{k}{k+1}
        \end{align*}
        and on $\Omega_s$ where the LHS is non-negative we can take the $\frac{k+1}{k}$-th power of this inequality so that
        \begin{align} \label{ineq_of_integrands2}
            (v-s)^\frac{k+1}{k} A_{s,l}^{-\frac{1}{k}} \leq C_1 (-\psi_{l} + 1 ).
        \end{align}
        
        Instead of applying the Hörmander-Tian $\alpha$-invariant estimate for $\omega_X$-psh functions, we use the $L^p$ bound for $(C_3 \omega_X,\omega_X,k)$-sh functions as found in \cite{guo$L^infty$EstimatesComplex2021}: Since there is a uniform constant $C_3 > 0$ such that for all $t \in ]0,1]$ we have $\omega_t \leq C_3 \omega_X$, all the $\psi_{l}$ are $(C_3 \omega_X,\omega_X, k)$-sh independent of $t$. Then Proposition \ref{ksh_Lp_bound} implies for $p\in ]0, \frac{n}{n-k}[$ that
        \begin{align*}
            \int_{\Omega_s} \left( (v-s)^\frac{k+1}{k} A_{s,k}^{-\frac{1}{k}} \right)^p \omega_X^n  &\leq \int_{\Omega_s} \left( C_1 (-\psi_{l} + 1 )\right)^p \omega_X^n\\
            &\leq C_2(p).
        \end{align*}
        Now, let us fix $p$ such that $k<p< p + \frac{1}{q-1}< \frac{n}{n-k}$ (recall here that we assumed $k=n-1$ and $q>2$). By Young's inequality, 
        \begin{align*}
            \left( (v-s)^\frac{k+1}{k} A_{s,k}^{-\frac{1}{k}} \right)^p e^{F_t} \leq e^{qF_t} + \left( (v-s)^\frac{k+1}{k} A_{s,l}^{-\frac{1}{k}} \right)^{p+\frac{1}{q-1}}
        \end{align*}
        holds on $\Omega_s$.
        Therefore,
        \begin{align*}
            \int_{\Omega_s} \left( (v-s)^\frac{k+1}{k} A_{s,l}^{-\frac{1}{k}} \right)^p e^{F_t} \omega_X^n \leq N + C_2\left(p+\frac{1}{q-1}\right) 
        \end{align*}
        
         By dividing this inequality by positive constants we get
        \begin{align*}
            \int_{\Omega_s} (v-s)^\frac{p(k+1)}{k}e^{F_t} \omega_X^n \leq C_4 A_{s,l}^{p/k} 
        \end{align*}
        where the RHS converges to $C_4 A_s^{p/k} $ for $l \rightarrow \infty$ but the LHS is independent of $l$.

        As remarked above, we want to estimate $A_s$ which we can do now by applying Hölder's inequality with exponent $\frac{p(k+1)}{k}$ and its dual exponent $p'$ to get
        \begin{align*}
            A_s = \int_{\Omega_s} (v-s) e^{F_t} \omega_X^n &\leq \left(\int_{\Omega_s} (v-s)^\frac{p(k+1)}{k}e^{F_t} \omega_X^n \right)^{\frac{p(k+1)}{k}} \left( \int_{\Omega_s} e^{F_t} \omega_X^n \right)^{\frac{1}{p'}}\\ 
            &\leq \left(C_4 A_s^{p/n} \right)^{\frac{p(k+1)}{k}}  \left( \int_{\Omega_s} e^{F_t} \omega_X^n \right)^{\frac{1}{p'}} \\
            &= C_5 A_s^{\frac{1}{k+1}} \phi(s)^{\frac{1}{p'}}
        \end{align*}
        and since we did assume $A_s > 0$ this leads to
        \begin{align*}
            r\phi(s+r) \leq A_s \leq C' \phi(s)^{\frac{k+1}{kp'}}
        \end{align*}
        which is equivalent to (\ref{fctl_ineq_phi}) since $\frac{k+1}{kp'} = \frac{pk +p -k}{kp} = 1 + \frac{p-k}{kp} \eqqcolon 1 + \delta_0$ given that $k < p < \frac{n}{n-k}$. This finishes the proof of Theorem \ref{meanvalueineq}.











\printbibliography[title={Bibliography}]

\pagestyle{empty}

\pagestyle{empty}
\vspace*{\fill}
\cleardoublepage

\end{document}